\documentclass[11pt,a4paper]{amsart}

\usepackage{amssymb}
\usepackage{amsthm}
\usepackage{amsmath}
\usepackage{graphicx}
\usepackage[hdivide={2.5cm,,2.5cm}, vdivide={3.5cm,,2.8cm}]{geometry} 

\usepackage[ 	colorlinks	=true, 
				linkcolor	=blue, 
				citecolor	=blue, 
				draft	=false, 
				bookmarks, 
				bookmarksnumbered	=true, 
				plainpages	=false
				]{hyperref}

\usepackage{comment}
\usepackage[T1]{fontenc}
\numberwithin{equation}{section}

\newtheorem{thm}{Theorem}[section]

\newtheorem{lem}[thm]{Lemma}
\newtheorem{prop}[thm]{Proposition}
\newtheorem{problem}[thm]{Problem}
\newtheorem{question}[thm]{Question}

\newtheorem*{claim}{Claim}

\theoremstyle{definition}

\newtheorem{rem}[thm]{Remark}

\newcommand{\CC}{\widehat{\mathbb{C}}}%riemann surface
\newcommand{\C}{\mathbb{C}}%complex plane
\newcommand{\D}{\mathbb{D}}%unit disk
\newcommand{\N}{\mathbb{N}}%natural number
\newcommand{\R}{\mathbb{R}}%real number
\newcommand{\A}{\mathcal{A}}%analytic functions
\renewcommand{\S}{\mathcal{S}}%univalent functions
\newcommand{\CTC}{\mathcal{C}}%clsoe-to-convex functions
\renewcommand{\R}{\mathcal{R}}%Noshiro-Warschawski functions
\newcommand{\dstyle}{\displaystyle}
\renewcommand{\Re}{\mathsf{Re}\,}
\renewcommand{\Im}{\mathsf{Im}\,}
\newcommand{\closure}{\overline}

\newcommand{\de}{\partial}

\usepackage{xcolor}
\definecolor{Ikkeicolor}{RGB}{220,0,0}
\title[Boundary geometry and linear accessibility]
{Boundary geometry and linear accessibility of functions with positive real derivative}
\author[S. Hoshinaga]{Shota Hoshinaga}
\author[I. Hotta]{Ikkei Hotta}
\author[L.-M. Wang]{Li-Mei Wang}
\subjclass[2020]{Primary 30C45; Secondary 30C35, 30C55, 30C80}
\keywords{functions with positive real derivative, close-to-convex function, Loewner chain, locally connected boundary, spherical length}
\thanks{The second author was supported by JSPS KAKENHI Grant Numbers JP25K07049 and JP23K25775.}
\address{National Institute of Technology, Kure College, 2-2-11 Aga-minami, Kure, Hiroshima 737-8506, Japan}
\email{s-hoshinaga@kure-nct.ac.jp}
\address{Department of Applied Science, Faculty of Engineering, Yamaguchi University, 2-16-1 Tokiwadai, Ube, Yamaguchi 755-8611, Japan}
\email{ihotta@yamaguchi-u.ac.jp}
\address{School of Statistics,
	University of International Business and Economics, No.~10, Huixin
	Dongjie, Chaoyang District, Beijing 100029, China}
\email{wangmabel@163.com}
\begin{document}

	%	++++++++++++++++++++++++++++++++++++++++++++++++++++++
	%
	%		Abstract
	%
	%	++++++++++++++++++++++++++++++++++++++++++++++++++++++

\begin{abstract}
We study the boundary geometry of the Noshiro--Warschawski class $\R$.
Using the geometric structure of close-to-convex domains and their
relation to Loewner chains, we investigate the boundary behavior of
functions in $\R$. In particular, we discuss the relation between
spherical length and local connectedness, and show that the boundary
of the image domain of a function in $\R$ need not be locally
connected. We also revisit the classical fact that $\R$ is not
contained in the class $\S^{*}$ of starlike functions and give a simple explicit
example of a function in $\R\setminus\S^*$.
\end{abstract}

\maketitle

	%	++++++++++++++++++++++++++++++++++++++++++++++++++++++
	%
	%		Section 1
			\section{Introduction}
	%
	%	++++++++++++++++++++++++++++++++++++++++++++++++++++++

One of the central themes in geometric function theory is to understand how analytic conditions imposed on a holomorphic function are reflected in the geometry of its image domain. While the local behavior of a non-constant holomorphic function is quite rigid, the boundary behavior of a conformal map can be extremely complicated. Even for normalized univalent functions on the unit disk, no reasonable regularity of the boundary can be expected in general.

This contrast becomes particularly interesting for classical subclasses of univalent functions. For instance, convex functions have locally connected boundaries, and hence the corresponding conformal maps extend continuously to the closed disk. Starlike and close-to-convex functions form larger and less rigid classes, and for such classes it becomes a delicate problem to determine how much boundary regularity is forced by the defining analytic condition.

In this paper, we investigate the boundary geometry of the
Noshiro--Warschawski class and discuss several related problems for classical subclasses of univalent functions.
Let $\D := \{z \in \C : |z|  <1\}$, and let $\A$ denote the class of holomorphic functions $f$ on $\D$ normalized by $f(0) =0$ and $f'(0)=1$.
We denote by $\S$ the subclass of $\A$ consisting of univalent functions.
We consider the class
\[
\R
:= \{f\in\A: \Re f'(z)>0 \text{ for all } z\in\D\}.
\]
It is known that for a function $f\in\mathcal A$, the condition that $f'(\D)$ is contained in a half-plane not containing the origin ensures the univalence of $f$ on $\D$. In particular, every function in $\R$ is univalent, and hence $\R\subset\mathcal S$. This is the classical Noshiro--Warschawski criterion, due independently to Noshiro \cite{Noshiro:1934} and Warschawski \cite{Warschawski:1935}.

As will be discussed in Section~2, close-to-convex functions admit a natural description in terms of Loewner chains. In particular, the complement of the image domain can be covered by a union of half-lines. For functions in $\R$, these half-lines can be chosen in a more specific way. This observation is one of the starting points of the present paper: we investigate to what extent such geometric restrictions are reflected in the boundary behavior of functions in $\R$.

The paper is organized as follows. 
In Section~2, we review close-to-convex functions and Loewner chains, and collect several facts needed later. We also discuss linear accessibility of close-to-convex domains. 
In Section~3, we study the boundary geometry of functions in $\R$. After discussing spherical length and its relation to local connectedness, we give a negative answer to the question of whether the boundary of every image domain of a function in $\R$ is locally connected. 
In Section~4, we revisit the classical fact that there exist functions in $\R$ which are not starlike, and give a particularly elementary example of such a function. 
Appendix~A collects some facts concerning the spherical distance and local connectedness that are used in Section~3, while Appendix~B is devoted to the local uniform convergence of linearly accessible functions.

\begin{rem}[General Conventions]
Throughout this paper, unless otherwise stated, the boundary of any plane domain is taken in the Riemann sphere $\CC$. Furthermore, continuity at boundary points is always understood with respect to the spherical metric.
\end{rem}

	%	++++++++++++++++++++++++++++++++++++++++++++++++++++++
	%
	%		Section 2
			\section{Preliminaries on close-to-convex functions and Loewner chains}
	%
	%	++++++++++++++++++++++++++++++++++++++++++++++++++++++

\subsection{Close-to-convex functions}

%Let $\mathcal S^*$ denote the class of normalized starlike functions in $\S$.
%, that is,
%$$
%\mathcal S^*
%:=
%\left\{
% f\in\mathcal S:
% \operatorname{Re}\frac{zf'(z)}{f(z)}>0
% \quad (z\in\D)
%\right\}.
%$$
As a subclass of $\mathcal S$, Kaplan introduced the class of close-to-convex functions in 1952 \cite{Kaplan:1952}. A function $f\in \A$ is said to be \textit{close-to-convex} if there exist a starlike function $g$ such that
\begin{equation}
\label{definition-ctc}
\Re
\frac{zf'(z)}{g(z)}
>0
\quad (z\in\D),
\end{equation}
Here, a holomorphic function $g$ on $\D$ with $g(0)=0$ is said to be \textit{starlike} if $\Re[ zg'(z)/g(z) ]>0$ for all $z \in \D$.
We denote by $\CTC$ the class of close-to-convex functions.
In particular, by choosing the starlike function $g(z)=z$, we see that
$\R \subset \CTC$. Thus the class $\R$ may be regarded as a special case of close-to-convex functions.

The image of $\D$ under a function $f\in\CTC$ is known to be \textit{linearly accessible}. More precisely, $\C\!\setminus \!f(\D)$ can be represented as a union of closed half-lines which are mutually disjoint except possibly at their endpoints. The notion of linear accessibility was introduced by Biernacki \cite{Biernacki:1936}, and the relationship between close-to-convexity and linear accessibility was established by Lewandowski \cite{Lewandowski:1958,Lewandowski:1960}.
Since these papers are written in French and the original proofs of the equivalence are rather involved, we recall below a short Loewner chain proof of the implication from close-to-convexity to linear accessibility due to Bielecki and Lewandowski \cite{BielLewa:1962}. For the converse implication, we refer to Koepf's simplified proof in English \cite{Koepf:1989}.

We use the following stability property later. 

%We shall use the following stability property later. Since the notion of linear accessibility goes back to Biernacki \cite{Biernacki:1936}, whose paper is written in French, we include the statement and proof for the reader's convenience.

\begin{lem}[{\cite[Lemme III, p.297]{Biernacki:1936}}]
\label{lemma-closedness-LA}
Let $\{f_n\}$ be a sequence of functions in $\mathcal S$ which converges locally uniformly in $\D$ to a function $f\in\mathcal S$. Suppose that each domain $f_n(\D)$ is linearly accessible. Then $f(\D)$ is also linearly accessible.
\end{lem}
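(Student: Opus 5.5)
We prove this with the Carathéodory kernel theorem combined with a compactness argument on half-lines. Put $\Omega_n:=f_n(\D)$ and $\Omega:=f(\D)$. Since $f_n\to f$ locally uniformly and $f\in\S$, the kernel theorem says $\Omega_n\to\Omega$ in the sense of kernel convergence with respect to $0$. In particular, every compact subset $K\subset\Omega$ lies in $\Omega_n$ for all large $n$. Conversely, every point $w\in\C\setminus\Omega$ is a limit of points $w_n\in\C\setminus\Omega_n$; otherwise a fixed disk around $w$ would lie in infinitely many $\Omega_n$, and hence in the kernel of that subsequence. Since every subsequence has kernel $\Omega$, this is impossible.

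Fix $w\in\C\setminus\Omega$ and choose such $w_n\to w$. By linear accessibility of $\Omega_n$, there is a closed half-line
\[
L_n=\{w_n'+te^{i\theta_n}:t\ge0\}\subset\C\setminus\Omega_n
\]
containing $w_n$. We may take $w_n'=w_n$ by passing to the sub-half-line starting at $w_n$. Pass to a subsequence with $\theta_n\to\theta$, and set $L:=\{w+te^{i\theta}:t\ge0\}$. We claim $L\subset\C\setminus\Omega$. Suppose instead that $w+t_0e^{i\theta}\in\Omega$ for some $t_0>0$. A small closed disk $\overline{B}$ around this point lies in $\Omega$, hence in $\Omega_n$ for large $n$. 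But $w_n+t_0e^{i\theta_n}\in L_n$ converges to the centre of $B$, so for large $n$ the half-line $L_n$ meets $\Omega_n$, a contradiction. Thus every complement point lies on a closed half-line contained in $\C\setminus\Omega$.

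It remains to produce a family of such half-lines that are mutually disjoint except possibly at their endpoints, which is what the definition requires. This is the main obstacle, because limits of the disjoint families for $\Omega_n$ need not obviously stay disjoint. The approach is as follows. For each $w\in\partial\Omega$, consider the set of half-lines from $w$ contained in $\C\setminus\Omega$; it is nonempty by the argument above. Two limit half-lines $L,L'$ arising from the disjoint families $\{L_n\}$, $\{L_n'\}$ cannot cross transversally at an interior point. If they did, then for large $n$ the approximating segments $L_n,L_n'$ would also cross, because a transversal crossing of segments is stable under small perturbation. That contradicts the disjointness of the $n$-th family. Non-transversal overlap means collinear overlapping half-lines, and then one contains the other.

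So we build the family by assigning to each $w\in\C\setminus\Omega$ a limit half-line $L_w$ starting at $w$. Whenever $L_w$ and $L_{w'}$ overlap, we replace both by the larger one. The resulting maximal half-lines cover $\C\setminus\Omega$ and pairwise meet at most at an endpoint. Here one checks, as in the crossing argument, that a half-line cannot meet another at an interior point in a tangential way without the two being collinear. Alternatively, one can bypass this bookkeeping by taking, for each $w$, a half-line in $\C\setminus\Omega$ from $w$ with extremal direction (using closedness of the set of admissible directions, which follows from the argument of the second paragraph) and verifying non-crossing directly.
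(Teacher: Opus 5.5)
Your first two paragraphs follow the same route as the paper's Theorem~\ref{LA-thm01}. However, the proposal has genuine gaps, and the step you treat as bookkeeping is exactly the step the paper itself leaves open.

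First, the claim that \emph{every} $w\in\C\setminus\Omega$ is a limit of points of $\C\setminus\Omega_n$ is false. A disk lying in infinitely many $\Omega_n$ belongs to the kernel only if it is joined to $0$ through sets that eventually lie in the $\Omega_n$. For example, let $\Omega_n$ be $\D$ joined to the half-plane $\{\Re w>3\}$ by the channel $\{0<\Re w<4,\ |\Im w|<1/n\}$. Its complement is a union of vertical half-lines, so $\Omega_n$ is linearly accessible. After the rescaling that puts the Riemann maps in $\S$, they converge to the identity, so $\Omega=\D$. Yet $w=10$ has a fixed neighbourhood inside $\Omega_n$ for all large $n$. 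The approximation holds only for $w\in\partial\Omega$, where every disk about $w$ meets $\Omega$; this is why the paper works with boundary points only. You therefore obtain half-lines issuing only from boundary points, and covering $\C\setminus\Omega$ is no longer automatic. One must show that every component of $\C\setminus(\Omega\cup\bigcup_{w\in\partial\Omega}L_w)$ is an angular set with vertex on $\partial\Omega$. That is Problem~\ref{LA-problem}, which the authors say they could not justify.

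Second, the disjointness step fails as written.
\begin{itemize}
\item You choose a subsequence separately for each $w$. The crossing argument, however, needs $L_n$ and $L_n'$ from the same decomposition of $\C\setminus\Omega_n$ along a common subsequence. The paper arranges this by a diagonal argument on a countable dense subset of $\partial\Omega$, followed by a second limit.
\item The claim ``non-transversal overlap means one contains the other'' is false. Collinear half-lines with opposite directions, e.g.\ $[0,\infty)$ and $(-\infty,2]$, overlap without containment, and the limiting argument does not rule them out; see the remark after Theorem~\ref{LA-thm01}.
\item Merging is unjustified. An increasing union of collinear half-lines can be a full line, and you never verify that the merged family meets only at endpoints.
\item The ``extremal direction'' variant does not help. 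Admissible direction sets need not be arcs, and extremality by itself gives no non-crossing. For a half-plane $\Omega$ it yields only half-lines parallel to the boundary line, nested along each parallel line.
\end{itemize}

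So at most you recover the content of Theorem~\ref{LA-thm01}. The passage from a non-crossing family of half-lines to an actual decomposition of $\C\setminus\Omega$ is missing. The paper does not supply it either; it defers to Biernacki's original remark.
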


%For the proof, see Appendix B.

We discuss Biernacki's original proof in Appendix~B.

\subsection{Loewner chains}

A family $(f_t)_{t\geq 0}$ of holomorphic functions on $\D$ is said to be a \textit{Loewner chain} if $a(t) : = f_{t}'(0)$ is locally absolutely continuous on $[0,\infty)$ with $d|a(t)|/d t  > 0$ for a.e. $t \ge 0$ and $|a(t)| \to \infty$ as $t \to \infty$, $f_{t}/a(t) \in \S$ for all $t \in [0,\infty)$ and $f_{s} (\D) \subsetneq f_{t} (\D)$ for all $s < t$. In this case, the strict inclusion implies that $|a(t)|$ is strictly increasing (see e.g. \cite{Hotta:2010a}).
It is well known that every Loewner chain $(f_{t})$ satisfies the Loewner differential equation
\begin{equation}
\label{LoewnerPDE}
\dot{f}_{t}(z)  =
f_{t}'(z)\cdot z p(z,t) 
\qquad
(\dot{f}_{t} := \partial f_{t}/\partial t, f_{t}' := \partial f_{t}/\partial z)
\end{equation}
for almost every $t\geq 0$, where $p(\cdot,t)$ is holomorphic in $\D$, $p(z,\cdot)$ is measurable, and $p(\D,t) \subset \{w \in \C : \Re w > 0\}$ for almost every $t\geq0$. 
%Conversely, such a $p$ with $\int_0^\infty \Re p(0,t)\,dt=\infty$ generates a locally absolutely continuous Loewner chain satisfying \eqref{LoewnerPDE}. 
We refer to \cite[Chapter~6]{Pom:1975} for the basic theory of Loewner chains.

By definition, $(f_{t})$ describes a growing family of simply connected domains $f_s(\D)\subsetneq f_t(\D)\,\, (s<t).$
From a different point of view, for each fixed $z\in\D$, the mapping
$t \mapsto f_t(z)$ defines a curve in the complex plane. 
Thus, a Loewner chain may also be regarded as a family of trajectories
$$
\{t \mapsto f_t(z): z \in \D\}
$$
associated with $(f_{t})$.

This geometric interpretation can be applied to close-to-convex functions as follows.
Let $f \in \CTC$, and choose a starlike function $g \in \S^{*}$ satisfying \eqref{definition-ctc}.
Then $f_t(z):=f(z)+t g(z)$ forms a Loewner chain (see e.g. \cite[p.12]{MR4018207}).
To see the geometry of this chain more precisely, fix $r \in (0,1)$ and put $f^{r}(z) := f(r z)$ and $g^{r}(z) := g(r z)$.
Then $f_t^{r} := f^{r} + t g^{r}$ is again a Loewner chain.
Moreover, $f_t^r$ is holomorphic in $\closure{\D}$. 
For each $\zeta\in\de\D$, the trajectory
$$
\gamma_\zeta^{r}
:=
\{f_t^r(\zeta):t\geq0\}
=
\{f^{r}(\zeta)+t g^{r}(\zeta):t\geq0\}
$$
is therefore a closed half-line issuing from $f^{r}(\zeta)$ with inclination $\arg g^{r}(\zeta)$.

By the univalence and nesting property of a Loewner chain, these trajectories do not cross each other. 
Further, since $\arg g^{r}(\zeta)$ is continuous and strictly increasing, one can show that $\bigcup_{\zeta \in \de\D} \gamma_{\zeta}^{r} = \C\,\backslash\,f^{r}(\D)$, which proves that $f^r(\D)=f(r\D)$ is linearly accessible for every $r\in(0,1)$.
Since $f^r \to f$ locally uniformly as $r\to1$,
Lemma~\ref{lemma-closedness-LA} implies that $f$ is linearly accessible.
This is essentially the short Loewner-chain proof of the implication from close-to-convexity to linear accessibility given by Bielecki and Lewandowski \cite{BielLewa:1962}.

Let us next apply this interpretation to the Noshiro--Warschawski class $\R$ (see also \cite{HottaWang:2014}). Suppose that $f\in\R$. Since the starlike function in \eqref{definition-ctc} can be chosen as $g(z)=z$, the preceding construction shows that
\begin{equation}
\label{eq:R-dilated-ray}
\{f(r\zeta)+t\zeta:t\geq0\}
\subset\C\setminus f(r\D),
\qquad r\in(0,1),\quad \zeta\in\de\D,
\end{equation}
after reparametrizing the half-line.
Suppose that the finite radial limit $f(\zeta)=\lim_{r\uparrow1}f(r\zeta)$ exists at $\zeta\in\de\D$.
Then
\begin{equation}
\label{eq:R-boundary-ray}
\gamma_\zeta
:=
\{f(\zeta)+t\zeta:t\geq0\}
\subset\C\setminus f(\D).
\end{equation}
Indeed, suppose to the contrary that
$w:=f(\zeta)+t\zeta\in f(\D)$ for some $t\geq0$.
Then there exists $r_{0} \in (0,1)$ such that $w\in f(r\D)$ for all $r \in [r_{0},1]$.
Since $f(r_0\D)$ is open and $f(r\zeta)+t\zeta\to w$ as $r\uparrow1$, we have
\[
f(r\zeta)+t\zeta\in f(r_0\D)\subset f(r\D)
\]
for all $r\in(r_0,1)$ sufficiently close to $1$, contrary to \eqref{eq:R-dilated-ray}.
%Since $f(r_0\D)$ is open and $f(r\zeta)+t\zeta\to w$ as $r \to 1$, we would have
%\[
%f(r\zeta)+t\zeta\in f(r_0\D)\subset f(r\D)
%\]
%for all $r \in [r_{0}, 1]$,
%contrary to \eqref{eq:R-dilated-ray}.
Thus, if $f(e^{i\theta})$ exists and is finite, then a closed half-line of direction angle $\theta$ issues from $f(e^{i\theta})$ and lies in $\C\setminus f(\D)$.
This observation gives some simple geometric obstructions for a function in $\R$.

Here, we say that a domain $D\subset\C$ has a \textit{slit}
if there exists a nondegenerate closed Jordan arc
$J\subset\partial D$ such that every point of $J$,
except for one of its endpoints, belongs to
$\operatorname{int}(D\cup J)$.

\begin{prop}
\label{prop:R-no-slit}
Let $f\in\S$. If $f(\D)$ has a slit, then $f\notin\R$.
\end{prop}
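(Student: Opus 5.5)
We argue by contradiction: assume $f\in\R$ and that $f(\D)$ has a slit $J$. Let $w_0$ be the endpoint of $J$ which \emph{does} belong to $\operatorname{int}(D\cup J)$, where $D:=f(\D)$; this is the free tip of the slit. Choose a disk $U=\{|w-w_0|<\rho\}$ with $U\subset D\cup J$. Then $U\cap(\C\setminus D)=U\cap J$. The only tool from the paper is the ray property \eqref{eq:R-boundary-ray}. If $f$ has a finite radial limit $f(\zeta)=w$, then the closed half-line $\{w+t\zeta:t\ge0\}$ misses $D$. Consequently its initial piece inside $U$ lies in $J$.

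\emph{Step 1 (preimages with radial limits).} Every point $w\in J\cap U$ is accessible from $D$, since points of $U\setminus J$ near $w$ lie in $D$. Take a Jordan arc in $D$ ending at $w$. Its preimage under $f$ is a curve in $\D$ tending to $\de\D$, and by the standard theory it lands at a single point $\zeta_w\in\de\D$. By Lindel\"of's theorem, applied to the bounded univalent function near $\zeta_w$, $f$ has angular, hence radial, limit $w$ at $\zeta_w$. Since radial limits are unique, distinct points $w$ have distinct $\zeta_w$. This is the point I expect to need the most care: justify the landing of the preimage curve (for instance via the prime-end or Koebe argument for univalent maps) and the application of Lindel\"of's theorem.

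\emph{Step 2 (the tip forces a segment).} Apply \eqref{eq:R-boundary-ray} at $\zeta_{w_0}$. The ray from $w_0$ in direction $\zeta_{w_0}$ stays in $J$ while inside $U$. Hence $J$ contains the segment $S=[w_0,w_0+\rho\zeta_{w_0})$. Since $J$ is a Jordan arc with $w_0$ as an endpoint, $S$ is the initial part of $J$. Shrinking $\rho$ if necessary, we get $J\cap U'=S\cap U'$ for a smaller disk $U'$ about $w_0$.

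\emph{Step 3 (contradiction).} Take $w\in S$ with $0<|w-w_0|<\rho'/2$, where $\rho'$ is the radius of $U'$. The ray from $w$ in direction $\zeta_w$ must stay in $S$ inside $U'$. Hence $\zeta_w\in\{\zeta_{w_0},-\zeta_{w_0}\}$. If $\zeta_w=-\zeta_{w_0}$, then the ray passes through $w_0$ and continues beyond the tip while still inside $U'$. Points beyond the tip lie in $U'\setminus J\subset D$, which contradicts \eqref{eq:R-boundary-ray}. So $\zeta_w=\zeta_{w_0}$ for all such $w$. This contradicts the injectivity of $w\mapsto\zeta_w$ from Step 1, since $f$ would have two different radial limits $w\neq w_0$ at the same point. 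Hence $f\notin\R$.
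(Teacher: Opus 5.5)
Your proof is correct, and it takes a genuinely different route from the paper.

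\textbf{The paper's route.} The paper works at an interior point of $J$. It first uses the linear accessibility of $f(\D)$ to find a straight subarc of $J$; this is the close-to-convex theory, which rests on Lemma~\ref{lemma-closedness-LA}. It then uses the two-sided boundary correspondence at a point $\omega$ of that subarc to obtain two distinct preimages $\zeta_1\neq\zeta_2$. The rays from \eqref{eq:R-boundary-ray} at these points must point in opposite directions, so their union is the whole line $L$. Then $L\subset\C\setminus f(\D)$, although $f(\D)$ meets both sides of $L$, which contradicts connectedness.

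\textbf{Your route.} You work at the free tip $w_0$ instead. One application of \eqref{eq:R-boundary-ray} straightens $J$ near $w_0$. After that, the ray from any nearby $w\in S$ is forced to have direction $\zeta_{w_0}$, since the opposite direction would run past the tip into $D$. So the single point $\zeta_{w_0}\in\de\D$ would carry two different radial limits.

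\textbf{What each approach buys.} Your route uses only \eqref{eq:R-boundary-ray}. The paper obtains that directly from the chain $f(rz)+trz$, without Lemma~\ref{lemma-closedness-LA}, whose classical proof the authors say in Appendix~B they could not fully reconstruct. Your route also needs only one-sided access to each point, not the two-sided correspondence along a slit. The price is that you must reach $w_0$ before knowing anything about the shape of $J$.

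\textbf{Two details to tighten.}
\begin{itemize}
\item Accessibility of $w_0$: the reason you give (points of $U\setminus J$ near $w$ lie in $D$) does not by itself produce an access. Cite the classical fact that every point of a Jordan arc is accessible from its complement, e.g.\ via Carath\'eodory's theorem for $\CC\setminus J$. Then take the tail of such a curve, which lies in $U\setminus J\subset D$. For $w\in S$, accessibility is trivial because $S$ is straight.
\item Lindel\"of's theorem: $f$ need not be bounded near $\zeta_w$. Use Lindel\"of's theorem in its form for univalent maps, or first apply the usual square-root and M\"obius normalization to reduce to a bounded function.
\end{itemize}
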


\begin{proof}
Suppose to the contrary that $f\in\R$.
Let $J\subset\partial f(\D)$ be a slit.
Since $f(\D)$ is linearly accessible, a closed half-line
contained in $\C\setminus f(\D)$ issues from any interior
point of $J$.
Its initial segment lies in $J$, so $J$ contains a
nondegenerate straight subarc.
We may therefore choose an interior point $\omega$ of
this subarc and an open disk $B$ centered at $\omega$
such that
$
B\setminus f(\D)=B\cap L,
$
where $L$ is the straight line containing the subarc.

By the local boundary correspondence on the two sides
of a straight slit, there exist distinct points
$\zeta_1,\zeta_2\in\de\D$ such that $f(\zeta_1)=f(\zeta_2)=\omega$.
By \eqref{eq:R-boundary-ray},
\[
\{\omega+t\zeta_j:t\geq0\}
\subset\C\setminus f(\D),
\qquad j=1,2.
\]
The initial portions of these half-lines must lie in $L$.
Since $\zeta_1\neq\zeta_2$, their directions are opposite,
and hence $\zeta_2=-\zeta_1$.
It follows that
\[
L
=
\{\omega+t\zeta_1:t\geq0\}
\cup
\{\omega+t\zeta_2:t\geq0\}
\subset\C\setminus f(\D).
\]
However, $B\setminus L\subset f(\D)$, so $f(\D)$ meets
both open half-planes bounded by $L$.
This contradicts the connectedness of $f(\D)$.
\end{proof}

\begin{prop}
\label{prop:R-no-angular-domain}
Let $f\in\S$.
If $f(\D)$ contains an angular domain, then $f\notin\R$.
\end{prop}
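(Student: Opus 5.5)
We argue by contradiction, using the boundary half-lines \eqref{eq:R-boundary-ray}. Suppose $f\in\R$ and that $f(\D)$ contains an angular domain
\[
W=\{w_0+\rho e^{i\varphi}:\rho>0,\ \alpha<\varphi<\beta\},\qquad \alpha<\beta .
\]
The idea is to find a boundary point $\zeta=e^{i\theta}$ with $\theta\in(\alpha,\beta)$ at which $f$ has a finite radial limit. Then the half-line $\gamma_\zeta$, which lies in $\C\setminus f(\D)$, points in a direction strictly inside the opening of $W$. A half-line whose direction lies in the open sector of directions of $W$ eventually enters $W\subset f(\D)$, which is the contradiction.

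\emph{Step 1: existence of a suitable $\zeta$.} Since $f\in\R\subset\S$, it is univalent. By the classical theorem on univalent functions, $f$ has a finite radial limit $f(\zeta)$ at almost every $\zeta\in\de\D$. This follows, for instance, from Fatou-type results for univalent functions, or from the fact that $f$ lies in $H^p$ for $p<1/2$. The arc $\{e^{i\theta}:\alpha<\theta<\beta\}$ has positive length, so it contains a point $\zeta=e^{i\theta}$ at which the finite radial limit exists. If needed, we replace $(\alpha,\beta)$ by a subinterval of length less than $2\pi$ so that $\theta$ can be taken in $(\alpha,\beta)$ itself.

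\emph{Step 2: the ray enters $W$.} By \eqref{eq:R-boundary-ray},
\[
\gamma_\zeta=\{f(\zeta)+te^{i\theta}:t\ge0\}\subset\C\setminus f(\D).
\]
On the other hand,
\[
f(\zeta)+te^{i\theta}-w_0=t\bigl(e^{i\theta}+(f(\zeta)-w_0)/t\bigr).
\]
The argument of the bracket tends to $\theta$ as $t\to\infty$ and its modulus tends to $1$. Since $\theta$ lies in the open interval $(\alpha,\beta)$, the point $f(\zeta)+te^{i\theta}$ lies in $W$ for all large $t$. This contradicts $W\subset f(\D)$, so $f\notin\R$.

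The main point to check is Step 1: we need a \emph{finite} radial limit at some point of a prescribed arc. This is exactly where the almost-everywhere existence of finite radial limits for univalent functions is used, and I would cite it as a classical fact (e.g.\ Pommerenke's book). Everything else is elementary, because \eqref{eq:R-boundary-ray} has already been established in the text above.
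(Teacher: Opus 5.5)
Your proposal is correct and essentially reproduces the paper's proof. You choose a direction $\theta$ strictly inside the opening at which $f$ has a finite radial limit, using almost-everywhere existence for univalent functions. The half-line from \eqref{eq:R-boundary-ray} then eventually enters the angular domain, contradicting its inclusion in $f(\D)$.
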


\begin{proof}
Suppose to the contrary that $f\in\R$ and $f(\D)$ contains a sector
\[
A=\{a+ w: \theta_1< \arg w<\theta_2\},
\]
where $a\in\C$ and $0<\theta_2-\theta_1<2\pi$.
Since every univalent function in $\D$ has a finite
radial limit almost everywhere on $\de\D$ (see e.g. \cite[p.11]{Pom:1992boundary}), we can choose
$\theta\in(\theta_1,\theta_2)$ such that
$f(\zeta)$ exists and is finite, where $\zeta=e^{i\theta}$.
By \eqref{eq:R-boundary-ray},
\[
\{f(\zeta)+t\zeta:t\geq0\}
\subset\C\setminus f(\D).
\]
On the other hand, since $\theta$ lies strictly between
the boundary directions of $A$,
\[
f(\zeta)+t\zeta\in A
\]
for all sufficiently large $t$.
This contradicts $A\subset f(\D)$.
\end{proof}

	%	++++++++++++++++++++++++++++++++++++++++++++++++++++++
	%
	%		Section 3
			\section{Boundary geometry of the Noshiro--Warschawski class}
	%
	%	++++++++++++++++++++++++++++++++++++++++++++++++++++++

\subsection{Boundary geometry and spherical length}

The discussion in the preceding section and the two propositions suggest that the boundary geometry of the image domain of a function in $\R$ is subject to rather strong restrictions. These observations suggest, at least heuristically, that the boundary of $f(\D)$ should be reasonably well-behaved. This raises the following question:

\begin{question}
\label{our-question}
Is the boundary of $f(\D)$ locally connected for every $f\in\R$?
\end{question}

In order to approach this problem, we consider the spherical length of the images of concentric circles under $f$.
Indeed, for a conformal map $f$ on $\D$, if 
$$
\Lambda_{\textup{sph}}(f)
:=
\sup_{0<r<1}
\int_{\partial\D_r}
\frac{|f'(z)|}{1+|f(z)|^2}\,|dz|
<\infty,
$$
where $\D_{r} :=\{z \in \C : |z| < r\}$, then $\partial f (\D)$ is locally connected. 
This is proved in Appendix~A; see Theorem~\ref{loca-connect-sph}.
Thus, one possible approach to Question \ref{our-question} would be to establish a uniform bound for the spherical lengths
for functions $f\in\R$. Unfortunately, this approach does not work in general.

\begin{thm}
For every $C>0$, there exists $f\in\R$ such that
$$
\Lambda_{\textup{sph}}(f)>C.
$$
\end{thm}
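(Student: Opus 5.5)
Our plan is to build, for each $C>0$, an explicit $f\in\R$ whose image domain contains many long thin "fingers" that wind near a fixed point on the sphere. The cleanest way is to let the boundary oscillate: take $f$ with $f'$ of the form
\[
f'(z)=1+\varepsilon\,h(z),
\]
so that $\Re f'>0$ is automatic once $|\varepsilon h|<1$ in $\D$. In such a construction $|f'|$ is comparable to $1$, so the spherical length of $f(\partial\D_r)$ can only be large if $f(\partial\D_r)$ repeatedly visits a bounded region where the spherical density $1/(1+|f|^2)$ is not small. Since $f$ with $|f'-1|<1$ is bounded ($|f(z)|\le 2|z|\le2$), the density satisfies $1/(1+|f|^2)\ge 1/5$ on all of $\D$. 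Hence the spherical length dominates $\frac15$ times the Euclidean length, and it suffices to produce $f\in\R$ with $\sup_r\int_{\partial\D_r}|f'(z)|\,|dz|$ as large as we like.

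The Euclidean length is also bounded by $\int|f'|\le 2\cdot 2\pi r$ when $|f'-1|<1$, however, so the first-step choice must be relaxed: we let $f'$ take values in the whole right half-plane, approaching the imaginary axis with large modulus. Concretely we try
\[
f'(z)=\frac{1+z^n}{1-z^n}\quad\text{or more robustly}\quad f'(z)=\Bigl(\frac{1+z^n}{1-z^n}\Bigr)^{\alpha},\ 0<\alpha<1,
\]
which has positive real part in $\D$. Then $f(z)=\int_0^z f'$ is bounded when $\alpha<1$ (the singularities $|1-z^n|^{-\alpha}$ are integrable), so again $|f|\le M$ uniformly in $r$, giving density at least $1/(1+M^2)$. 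Meanwhile $\int_{\partial\D_r}|f'|\,|dz|\to\int_0^{2\pi}|\cot(n\theta/2)|^{\alpha}d\theta$ as $r\to1$, which is a fixed finite number independent of $n$; so to make it large we instead choose $\alpha=1$ with $n$ fixed, or use the family $f'(z)=\frac{1+z}{1-z}$ itself: then $\int_{\partial\D_r}|f'|\,|dz|\sim 2\log\frac1{1-r}\to\infty$. The catch is that now $f(z)=-z-2\log(1-z)$ is unbounded, so we must check the density does not kill the length. Near $z=1$, $|f|\approx2\log\frac{1}{|1-z|}$ while $|f'|\approx 2/|1-z|$; with $u=|1-z|$ the spherical integrand is about $\frac{2/u}{1+4\log^2(1/u)}$, and $\int_{1-r}^{c}\frac{du}{u\log^2(1/u)}$ converges. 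So $\Lambda_{\rm sph}$ of this single function is finite, and a more clever family is needed.

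The main obstacle is thus to get unbounded length while keeping $f$ in a bounded region. We will superpose many moderate singularities: take $f'(z)=\frac1N\sum_{k=1}^{N}\bigl(\frac{1+\bar\zeta_k z}{1-\bar\zeta_k z}\bigr)^{\alpha}$ does not help (averaging). Instead take a single exponent close to $1$: $f_\alpha'(z)=\bigl(\frac{1+z}{1-z}\bigr)^{\alpha}$. Then $f_\alpha$ is bounded by $M_\alpha\sim \frac{C'}{1-\alpha}$, but $\int_0^{2\pi}|\cot(\theta/2)|^{\alpha}d\theta\sim\frac{4}{1-\alpha}$; and the spherical integrand near $z=1$ is $\frac{u^{-\alpha}}{1+c\,u^{2(1-\alpha)}/(1-\alpha)^2}$, which for $u$ in the range where $u^{1-\alpha}\lesssim 1-\alpha$ (i.e.\ $u\le e^{-K\log\frac1{1-\alpha}/(1-\alpha)}$, still a nonempty range) is about $u^{-\alpha}$; integrating $u^{-\alpha}$ over $u\in[\delta^{2},\delta]$ with $\delta$ that small gives roughly $\frac{\delta^{1-\alpha}}{1-\alpha}\asymp 1$—too small. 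So instead we exploit the region where $|f|$ is large: $|f|\approx \frac{u^{1-\alpha}}{1-\alpha}$ and the spherical integrand $\approx\frac{u^{-\alpha}}{|f|^2}$ when $|f|\gg1$; tracking this carefully, the decisive task is to compute $\int\frac{|f'|}{1+|f|^2}$ along the boundary circle near the singularity and show it grows like $\log\frac{1}{1-\alpha}$ (coming from the window $1\lesssim|f|$ where the integrand is $\asymp \frac{d|f|}{|f|}$... integrated between $|f|\approx1$ and $|f|\approx M_\alpha$). Verifying this lower bound, uniformly for $r$ close to $1$ via Fatou/monotone convergence, is the step we expect to be delicate; once it holds, choosing $\alpha$ close enough to $1$ gives $\Lambda_{\rm sph}(f_\alpha)>C$.
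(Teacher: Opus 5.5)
Your proposal has a genuine gap, and it sits exactly at the step you flag as delicate. For $f_\alpha'(z)=\bigl(\frac{1+z}{1-z}\bigr)^{\alpha}$, the quantity $\Lambda_{\textup{sph}}(f_\alpha)$ stays bounded as $\alpha\to1$, so the claimed growth like $\log\frac{1}{1-\alpha}$ is false.

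Near $z=1$ you have $f_\alpha(z)=f_\alpha(1)-\frac{2^{\alpha}}{1-\alpha}(1-z)^{1-\alpha}$, up to an error that is Lipschitz uniformly in $\alpha$. So it is $f_\alpha(1)-f_\alpha$, not $f_\alpha$, whose size is about $u^{1-\alpha}/(1-\alpha)$. The image of the part of $\partial\D_r$ near $1$ is therefore a single long finger of width about $2\pi$ along the positive real axis, traversed out and back, with essentially radial motion.

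Along that finger the spherical element is $\frac{|df|}{1+|f|^2}\approx\frac{d|f|}{|f|^2}$, not $\frac{d|f|}{|f|}$. The window $1\le|f|\le M_\alpha$ thus contributes at most about $2\int_1^\infty\rho^{-2}\,d\rho$. The turn-around near $f_\alpha(r)$ has bounded Euclidean length. The rest of the circle contributes $O(1)$, because $|f_\alpha'|\le\max\{1,|\frac{1+z}{1-z}|\}$ is bounded there uniformly in $\alpha$.

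Conceptually, one finger is one excursion towards $\infty$ on the sphere, and an excursion has bounded spherical cost however far it reaches. This is also why your reduction to Euclidean length cannot work here: the Euclidean length $\sim4/(1-\alpha)$ is much smaller than $1+M_\alpha^2\sim4/(1-\alpha)^2$.

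The missing idea is to produce \emph{many} excursions to $\infty$, separated by returns to a fixed bounded set. The function you wrote down first and then discarded does exactly this, if you keep $\alpha=1$ and let $n\to\infty$:
$f_n'(z)=\frac{1+z^n}{1-z^n}=\frac1n\sum_{k=0}^{n-1}\frac{\omega_n^k+z}{\omega_n^k-z}$, with $\omega_n=e^{2\pi i/n}$.
So for $\alpha=1$ averaging is precisely what helps. Dividing by $n$ narrows each channel to width about $2\pi/n$, but each channel still reaches $\infty$, and spherical length does not see the width.

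Concretely, $f_n(z)=2\sum_{p\ge0}\frac{z^{np+1}}{np+1}-z$, so $f_n(r\omega_n^k)\to\infty$. At the midpoints $\xi_k=e^{(2k+1)\pi i/n}$ the alternating series gives $|f_n(\xi_k)|<1$. The arcs of $\partial\D_r$ from $r\omega_n^k$ to $r\xi_k$ are disjoint. The image of each has spherical length at least $s\bigl(f_n(r\omega_n^k),f_n(r\xi_k)\bigr)$, which tends to $s(\infty,f_n(\xi_k))>\pi/4$. Hence $\Lambda_{\textup{sph}}(f_n)>n\pi/4$, with no fine asymptotics needed. This is the paper's proof.
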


\begin{proof}
For a positive integer $n$, let $f_n(z)$ be analytic in $\D$ with $f_n(0)=0$ and
$$
f_n'(z)
=\frac{1}{n}\sum_{k=0}^{n-1}\frac{\omega_n^k+z}{\omega_n^k-z},\quad \textup{where}\quad \omega_n=e^{\frac{2\pi i}n}.
$$
Since $\Re f_{n}'(z) >0$ for all $z \in \D$, $f_{n} \in \R$.
Integrating the above expression gives
\begin{align*}
f_n(z)&=-\frac{2}{n}\sum_{k=0}^{n-1}\Biggl[\omega_n^k\log(1-\overline{\omega_n^k}z)\Biggr]-z\\
&=\frac{2}{n}\sum_{k=0}^{n-1}\Biggl[\,\sum_{m=1}^{\infty}\left(\overline{\omega_n^{k(m-1)}}\frac{z^m}{m}\right)\Biggr]-z\\
&=\frac{2}{n}\sum_{m=1}^{\infty}\left(\frac{z^m}{m}\sum_{k=0}^{n-1}\overline{\omega_n^{k(m-1)}}\right)-z\\
&=2\sum_{p=0}^{\infty}\frac{z^{np+1}}{np+1}-z.
\end{align*}
For $k=0,\ldots,n-1$, let $\xi_k:=e^{\frac{(2k+1)\pi i}n}$.
Then $(\xi_k)^n=-1$, and hence
$$
f_n(\xi_k)
=
\xi_k
\left(
2\sum_{p=0}^{\infty}
\frac{(-1)^p}{np+1}-1
\right).
$$
Since $\frac{n}{n+1}<\sum_{p=0}^{\infty}\frac{(-1)^p}{np+1}<1$, we have $|f_n(\xi_k)|<1$.
On the other hand, $f_n(r\omega_n^k)\to\infty$ as $r\to1^-$.

For $0<r<1$, let $I_{k,r}$ denote the circular arc of
$\partial\D_r$ joining $r\omega_n^k$ to $r\xi_k$ in the
counterclockwise direction. The arcs $I_{k,r}$,
$k=0,\ldots,n-1$, are mutually disjoint. Since the
length of a curve is not smaller than the distance between its
endpoints,
$$
\int_{I_{k,r}}
\frac{|f_n'(z)|}{1+|f_n(z)|^2}\,|dz|
\geq
s\bigl(f_n(r\omega_n^k),f_n(r\xi_k)\bigr).
$$
Consequently,
$$
\int_{\partial\D_r}
\frac{|f_n'(z)|}{1+|f_n(z)|^2}\,|dz|
\geq
\sum_{k=0}^{n-1}
s\bigl(f_n(r\omega_n^k),f_n(r\xi_k)\bigr).
$$
Letting $r\to1^-$, we have $f_n(r\omega_n^k)\to\infty$ and $f_n(r\xi_k)\to f_n(\xi_k)$.
Since $|f_n(\xi_k)|<1$, we have
$$
s(\infty,f_n(\xi_k))=
\int_{|f_n(\xi_k)|}^{\infty}\frac{dt}{1+t^2}
=
\frac{\pi}{2}-\arctan |f_n(\xi_k)|
>
\frac{\pi}{4}.
$$
It follows that
$$
\liminf_{r\to1^-}
\int_{\partial\D_r}
\frac{|f_n'(z)|}{1+|f_n(z)|^2}\,|dz|
\geq
\sum_{k=0}^{n-1}
s(\infty,f_n(\xi_k))
>
\frac{n\pi}{4}.
$$
Therefore $\Lambda_{\text{sph}}(f_n)>n\pi/4$. Choosing $n$ sufficiently large so that $n\pi/4>C$ proves the
assertion.
\end{proof}

\begin{rem}
Even when $\partial f(\D)$ is rectifiable, the spherical length of
$\partial f(\D)$ need not coincide with $\Lambda_{\text{sph}}(f)$.
For example, consider $f(z)=z/(1-z)$, which belongs to $\S$. A direct computation shows that
$$
\int_{\partial\D_r}
\frac{|f'(z)|}{1+|f(z)|^2}\,|dz|
=
\frac{2\pi r}{\sqrt{1+4r^4}}.
$$
Hence
$\Lambda_{\text{sph}}(f)=\pi$, whereas
$$
\ell_s(\partial f(\D)) = \lim_{r\to1^-}
\int_{\partial\D_r}
\frac{|f'(z)|}{1+|f(z)|^2}\,|dz|
=
\frac{2\pi}{\sqrt5}.
$$
Thus, in general, the supremum over $0<r<1$ is strictly larger than
the spherical length of the boundary.
\end{rem}

\subsection{A negative answer to the question of local connectedness}

While exploring several possible approaches to Question~\ref{our-question},
we found a function in $\R$ whose image domain has a non-locally connected
boundary. This gives a negative answer to the question.

\begin{thm}
\label{negative-R-thm}
There exists $f \in \R$ such that $\de f(\D)$ is not locally connected.
\end{thm}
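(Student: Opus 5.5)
I will build $f$ explicitly from its derivative. Choose $f'(z)=\int_{\de\D}\frac{\zeta+z}{\zeta-z}\,d\mu(\zeta)$, i.e. a Herglotz integral of a probability measure $\mu$; then $\Re f'>0$ and $f'(0)=1$, so $f\in\R$ automatically. The task is to pick $\mu$ so that $\de f(\D)$ fails to be locally connected. The natural template is the function $f_n$ used in the preceding theorem. For $\mu=\frac1n\sum\delta_{\omega_n^k}$, the image $f_n(\D)$ has $n$ logarithmic "channels" going to $\infty$ in the directions $\omega_n^k$. Between consecutive channels there are boundary "teeth" whose tips $f_n(\xi_k)$ lie near the origin. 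A comb-type domain, whose boundary contains infinitely many teeth accumulating on a limiting segment, is the standard example of a non-locally connected boundary. So I take
\[
\mu=\sum_{j\ge1} c_j\,\delta_{e^{i\theta_j}},\qquad c_j>0,\ \sum c_j=1,\ \theta_j\downarrow 0,
\]
which gives
\[
f(z)=-z-2\sum_j c_j e^{i\theta_j}\log\bigl(1-e^{-i\theta_j}z\bigr).
\]
Here $f$ has logarithmic poles at the points $e^{i\theta_j}$ accumulating at $1$, and it is analytic across the open arcs between them.

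First I would compute the boundary explicitly. On an arc $(\theta_{j+1},\theta_j)$, $f$ extends continuously, so $\de f(\D)$ contains the curves $\Gamma_j=f(\{e^{i\theta}:\theta_{j+1}<\theta<\theta_j\})$. Each $\Gamma_j$ runs out to $\infty$ at both ends, asymptotic in direction to $e^{i\theta_j}$ and $e^{i\theta_{j+1}}$; this matches \eqref{eq:R-boundary-ray}, since the rays $f(\zeta)+t\zeta$ point outward. Next I estimate the "tip" of each $\Gamma_j$, meaning the point of $\Gamma_j$ nearest to a fixed bounded region. I would choose the weights $c_j$ and the gaps $\theta_j-\theta_{j+1}$ so that these tips converge to a point $w_0$, while consecutive tips are separated from each other by channels reaching $\infty$. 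Concretely, I want infinitely many disjoint boundary arcs $\Gamma_j$, each of them meeting a fixed small disk $B(w_0,\varepsilon)$ and also leaving a larger disk $B(w_0,R)$, and with the $\Gamma_j$ lying in different components of $\de f(\D)\cap B(w_0,R)$.

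Second, I would turn this into non-local connectedness in the simplest robust way. I would show that the prime end at $\zeta=1$ has an impression that is not a single point; equivalently, $f$ has no continuous extension to $1$. By Carathéodory's theorem (presumably among the facts collected in Appendix~A), this is equivalent to failure of local connectedness. To do so, I exhibit two sequences $z_k\to1$ and $z_k'\to1$ in $\D$. The first is $z_k=r_ke^{i\theta_k}$, with $r_k$ chosen so that $f(z_k)\to\infty$. The second consists of points $z_k'$ on the arcs between the poles, with $f(z_k')\to w_0$ finite. This contradicts continuity at $1$ in the spherical metric. The contradiction is clean because a continuous extension would force $f(1)$ to be a single point of $\CC$.

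The main obstacle is the quantitative control needed to keep $f(z_k')$ bounded. Near the cluster point, the log terms from all the other poles accumulate. I would handle this by taking the $c_j$ decaying very fast (say $c_j=2^{-j}$ up to normalization, or even faster) and the $\theta_j$ decaying geometrically. Then at the midpoint $z_k'=e^{i(\theta_k+\theta_{k+1})/2}$ each term $c_j|\log|1-e^{i(\cdot)}||$ is at most $c_j\,O(|\log(\theta_k-\theta_{k+1})|)$, and the sum stays bounded provided $\sum_j c_j|\log \theta_j|<\infty$. This condition is also exactly what I need to check that $f$ is bounded along the arcs. Meanwhile $f(r e^{i\theta_k})\to\infty$ as $r\to1$ because of the single term $-2c_ke^{i\theta_k}\log(1-r)$.
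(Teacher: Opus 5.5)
This is essentially the paper's proof: the paper takes exactly $\mu=\sum_n 2^{-n}\delta_{e^{i\pi/2^n}}$ and bounds $|\phi|$ uniformly at the arc midpoints $e^{3\pi i/2^{k+2}}$. It then contradicts continuity at $1$ of a hypothetical extension $\Phi$ using $\Phi(e^{i\pi/2^k})=\infty$; your comb heuristics are not needed, and boundedness of $f(z_k')$ suffices in place of convergence to $w_0$.

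One fix is needed in your estimate. The bound $c_j\,O(|\log(\theta_k-\theta_{k+1})|)$, applied to every $j$, only yields $O(k)$, so you must split the sum as the paper does. For $j\le k$, use that the midpoint is at distance $\gtrsim\theta_j$ from $e^{i\theta_j}$; for $j>k$, use $|\log\theta_k|\sum_{j>k}c_j\le\sum_{j>k}c_j|\log\theta_j|$. This is exactly where your condition $\sum_j c_j|\log\theta_j|<\infty$ enters.
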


\begin{proof}

Let $\omega_{n} := e^{i\pi/2^{n}}$ and
\begin{align*}
\phi(z) 
	& := \sum_{n=1}^{\infty} \frac{1}{2^{n}}\bigg(-z-2\omega_n \log(1-\bar{\omega}_{n}z)\bigg)\\
	& = -z -2\sum_{n=1}^{\infty} \frac{1}{2^{n}}\bigg(\omega_n \log(1-\bar{\omega}_{n}z)\bigg),
\end{align*}
where the branch of $\log$ is taken so that $\log 1=0$.
Note that $\phi\in\R$.
This function will provide the desired counterexample. We shall show that $\de\phi(\D)$ is not locally connected.
We first establish the following.

\begin{claim}
There exists an $M>0$ such that $|\phi(e^{i\pi\frac{3}{2^{k+2}}})| < M$ for any $k \in \N$.
\end{claim}

\begin{proof}[\textbf{Proof of the claim}]
Let $\xi_k:=e^{i\pi\frac{3}{2^{k+2}}}$.
For our purpose, it suffices to show that
\begin{align}
\label{claim-003}
\sum_{n=1}^{\infty}\frac1{2^n}\Bigl|\log(1-\bar\omega_n\xi_k)\Bigr|
\leq
\underbrace{\sum_{n=1}^{\infty}\frac1{2^n}
\Bigl|
\log|1-\bar\omega_n\xi_k|
\Bigr|}_{=:A}
+
\underbrace{\sum_{n=1}^{\infty}\frac1{2^n}
\Bigl|
\arg(1-\bar\omega_n\xi_k)
\Bigr|}_{=:B}
\end{align}
is uniformly bounded in $k$.
As for the term $B$ in \eqref{claim-003}, since the branch of $\log$ is taken as $\log 1=0$, we have
$\left|\arg(1-\bar\omega_n\xi_k)\right|<\pi/2$ and hence $B < \pi /2$.
Therefore, it remains to estimate the term $A$.

Since $1 - e^{i\theta} = -2i e^{(i\theta/2)} \sin (\theta/2)$, setting $\delta_{n,k}
:=\left|\frac1{2^n}-\frac3{2^{k+2}}\right| \in (0,1)$ we have
$$
|1-\bar\omega_n\xi_k|
=
2\sin\left(\frac{\pi}{2}\delta_{n,k}\right)
\geq
2\delta_{n,k}.
$$
Here we have used the elementary estimate $\sin x \geq \frac{2}{\pi}x$ for $0\leq x\leq\frac{\pi}{2}$. 
On the other hand, $|1-\bar\omega_n\xi_k|\leq2$. 
Hence
$$
\delta_{n,k}
\leq
\frac{|1-\bar\omega_n\xi_k|}{2}
\leq1.
$$
It follows that
$$
\left|
\log\frac{|1-\bar\omega_n\xi_k|}{2}
\right|
\leq
-\log\delta_{n,k}
$$
and therefore
\begin{align*}
\Bigl|\log |1-\bar\omega_n\xi_k|\Bigr|
&\leq
\log2+
\left|
\log\frac{|1-\bar\omega_n\xi_k|}{2}
\right|\\
&\leq
\log2-\log\delta_{n,k}.
\end{align*}
In order to estimate the term $- \log \delta_{n,k}$, we distinguish two cases. 
If $1\leq n\leq k$, then $\delta_{n,k}= (1-3\cdot2^{n-k-2})/2^n \geq 1/2^{n+2}$, and hence
\begin{equation}
\label{claim-001}
-\log\delta_{n,k}
\leq
(n+2)\log2.
\end{equation}
If $n\geq k+1$, then
$\delta_{n,k}=(3-2^{k+2-n})/2^{k+2}\geq 1/2^{k+2}$, so that
\begin{equation}
\label{claim-002}
-\log\delta_{n,k}
\leq
(k+2)\log2.
\end{equation}
Combining \eqref{claim-001} and \eqref{claim-002}, we obtain
\begin{align*}
-\sum_{n=1}^{\infty}
\frac1{2^n}\log\delta_{n,k}
&\leq
\log2 \cdot
\left(
\sum_{n=1}^{k}\frac{n+2}{2^n}
+
(k+2)\sum_{n=k+1}^{\infty}\frac1{2^n}
\right)\\
&\leq
\log2 \cdot
\left(
4+\frac{k+2}{2^k}
\right)
 \leq \frac{11}2 \log 2.
\end{align*}
Consequently $A \leq \frac{13}{2}\log2$.
Thus both $A$ and $B$ are uniformly bounded in $k$, and hence so is the left-hand side of
\eqref{claim-003} as well.

Since $|\omega_n|=|\xi_k|=1$, it follows that
\begin{align*}
|\phi(\xi_k)|
&\leq
1+
2\sum_{n=1}^{\infty}\frac1{2^n}
\Bigl|\log(1-\bar\omega_n\xi_k)\Bigr| < 1+13\log2+\pi
\end{align*}
and hence $|\phi(\xi_k)|$ is uniformly bounded in $k$.
The proof of our claim is complete.
\renewcommand{\qedsymbol}{$\blacksquare$}
\end{proof}

We now go back to the proof of Theorem \ref{negative-R-thm}. 
Suppose to the contrary that $\de\phi(\D)$ is locally connected. 
This is equivalent to saying that $\phi$ admits a continuous extension
$\Phi:\closure{\D}\to \CC$.
Consider the sequence $\{\omega_k\}=\{e^{i\pi/2^k}\}$ which converges to 1. Then $\Phi(\omega_k) = \infty$ for all $k \in \N$.
Since $\Phi$ is continuous on $\closure{\D}$, we have
\begin{equation}
\label{loc-connected-01}
\Phi(1) = \lim_{k \to \infty}\Phi(\omega_k) =\infty.
\end{equation}

On the other hand, consider $\{\xi_k\}=\{e^{i\pi\frac{3}{2^{k+2}}}\}$ which also converges to 1. 
The preceding claim says that there exists a constant $M$ such that
$|\Phi(\xi_k)|<M$ for every $k\in\N$. Again, by the continuity of $\Phi$, we have
\begin{equation*}
|\Phi(1)| = \lim_{k \to \infty}|\Phi(\xi_k)| \leq M.
\end{equation*}
This contradicts \eqref{loc-connected-01}.
We conclude that $\de\phi(\D)$ is not locally connected.
\end{proof}

\section{A simple example in $\R\setminus\S^{*}$}

It is known that neither of the classes $\R$ and $\S^{*}$ is contained
in the other. Indeed, the Koebe function belongs to $\S^{*}$ but not to
$\R$. On the other hand, Krzy\.{z} \cite{Krzyz:1962} first showed that
the condition $\Re f'(z)>0$ does not imply starlikeness.
A somewhat simplified version of his counterexample was later given in
\cite[Example~4.6]{KimPonnu:2004a}:
$$
f(z)=\int_0^z
\left\{
(1+i)\sqrt{\frac{\alpha-u}{\alpha+u}}-i
\right\}\,du,
\qquad
\alpha=\frac{3+4i}{5}.
$$
This function belongs to $\R$ but not to $\S^{*}$.
See also Mocanu \cite{MR948445} for another example and related
sufficient conditions for starlikeness.

We present another example that seems to be even simpler.
Let $R(z) :=-z-2\log(1-z)$ and its rotation $R_{t}(z) := e^{it}R(e^{-it}z)$ with angle $t \in [0,2\pi)$.

\begin{thm}
For every $0<s<\log 2/(\pi+\log 2) \approx 0.18075$,
the function
$$
F_s(z):=sR(z)+(1-s)R_{\pi/2}(z)
$$
belongs to $\R\setminus\S^{*}$.
\end{thm}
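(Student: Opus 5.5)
Membership in $\R$ is immediate. We have $R'(z)=-1+\frac{2}{1-z}=\frac{1+z}{1-z}$, which has positive real part on $\D$. Also $R_t'(z)=R'(e^{-it}z)$, so each $R_t$ belongs to $\R$, and $\R$ is convex. Hence $F_s\in\R$ for every $s\in[0,1]$. The real work is to show $F_s\notin\S^{*}$. I plan to do this geometrically, by looking at how $\partial F_s(\D)$ behaves near the singular point $z=1$. A starlike domain $D\ni 0$ has the property that for every $b\in\partial D$ (finite) the ray $\{tb:t\ge1\}$ lies in $\C\setminus D$. I will produce a boundary point $b$ whose outward radial ray enters $F_s(\D)$.

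Step 1 (asymptotics at $\zeta=1$). Near $z=1$, the term $R_{\pi/2}(z)=iR(-iz)$ is holomorphic, with value
\[
C:=iR(-i)=(1-s)^{-1}\cdot(1-s)\,i\bigl(i-\log2-\tfrac{i\pi}{2}\bigr)=\tfrac{\pi}{2}-1-i\log 2 .
\]
The term $sR(z)$ behaves like $-2s\log(1-z)$. For $\zeta=e^{i\theta}$ with $\theta\to0^{\pm}$, we have $\log|1-e^{i\theta}|\to-\infty$ and $\arg(1-e^{i\theta})\to\mp\pi/2$. Therefore
\[
\Re F_s(e^{i\theta})\to+\infty,\qquad
\Im F_s(e^{i\theta})\to y_\pm:=\pm s\pi-(1-s)\log2 .
\]
So the two boundary arcs adjacent to $\zeta=1$ are asymptotic to the horizontal lines $\Im w=y_+$ and $\Im w=y_-$, with $y_-<y_+$. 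By the local boundary correspondence, the image of a small neighbourhood of $1$ in $\D$ contains the half-strip
\[
\{x+iy: x>X_0,\ y_-+\varepsilon<y<y_+-\varepsilon\}
\]
for suitable $X_0$ and small $\varepsilon$. I would make this precise by writing $F_s=-2s\log(1-z)+h(z)$, with $h$ continuous on $\overline{\D}$ near $1$. The map $-2s\log(1-z)$ sends a small half-disk at $1$ onto (essentially) a half-strip of height $2s\pi$, and the bounded continuous perturbation $h$ only moves the edges slightly.

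Step 2 (use of the hypothesis). The condition $s<\log2/(\pi+\log2)$ is exactly $s\pi<(1-s)\log2$, that is, $y_+<0$. So the whole channel lies strictly below the real axis.

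Step 3 (contradiction with starlikeness). Take a boundary point $b=F_s(e^{i\theta})$ with small $\theta>0$, so that $b=x+iy$ with $x$ huge and $y\approx y_+<0$. The ray $\{tb:t\ge1\}$ has small negative slope $y/x$. As $t$ increases, its imaginary part decreases linearly from about $y_+$ to $-\infty$, while its real part stays beyond $X_0$. Hence the ray must cross the half-strip of Step 1 at real parts $>X_0$, and so it meets $F_s(\D)$. This is impossible for a starlike domain. Equivalently, in analytic form: $\Re\bigl(zF_s'(z)/F_s(z)\bigr)$ takes negative values near $z=1$ on the side $\arg z>0$. Indeed $zF_s'/F_s\approx \frac{2s}{1-z}\big/\bigl(-2s\log(1-z)+C\bigr)$, and one can check the sign directly by choosing $z=re^{i\theta}$ appropriately. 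I might present this analytic check as the rigorous version.

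The main obstacle is making Step 1 rigorous, i.e.\ controlling the perturbation $h$ uniformly enough to guarantee that the half-strip really lies in $F_s(\D)$. The alternative is to locate explicitly a point $z$ near $1$ where $\Re\bigl(zF_s'(z)/F_s(z)\bigr)<0$ and estimate the error terms. I would try the explicit-point route first, taking $z=e^{i\theta}$ with $\theta=\theta(s)$ small positive and using the boundary extension of the smooth quantity $\Re\bigl(\zeta F_s'(\zeta)/F_s(\zeta)\bigr)$, which must be $\ge0$ for starlike $F_s$ wherever $F_s$ is smooth at $\zeta$.
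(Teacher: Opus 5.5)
Your argument is sound, but its main line is geometric, whereas the paper argues by a direct computation at the boundary. The paper takes $z=e^{it}$ with $t\downarrow 0$ and evaluates $t\Re F_s(e^{it})\to 0$, $\Im F_s(e^{it})\to s\pi-(1-s)\log 2$, $\Re\bigl(e^{it}F_s'(e^{it})\bigr)\to -2s$ and $t\Im\bigl(e^{it}F_s'(e^{it})\bigr)\to 2s$. Hence $t\Re\bigl[e^{it}F_s'(e^{it})\overline{F_s(e^{it})}\bigr]\to 2s\,y_+<0$. This is exactly the explicit-point route you mention at the end, and your heuristic $zF_s'(z)\approx 2s/(1-z)$ picks out the dominant term.

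What your channel picture adds is an explanation of the threshold. The value $\log 2/(\pi+\log 2)$ is precisely where the upper edge $y_+$ of the logarithmic channel of $F_s(\D)$ at $\zeta=1$ crosses the real axis, and the paper's limit $2s\,y_+$ is the analytic trace of that. The price is Step 1, which you leave as a sketch. There are two ways to close it.
\begin{itemize}
\item Use Rouch\'e's theorem: $h(z)-h(1)=O(|1-z|)$ is exponentially small in $\Re(-2s\log(1-z))$.
\item Bypass the half-strip entirely. Since $\Im F_s(r)=-(1-s)\log(1+r^2)\to-(1-s)\log 2<y_+$ while $\Re F_s(r)\to+\infty$, the intermediate value theorem gives, for small $\theta>0$, an $r$ with $\arg F_s(r)=\arg b$ and $|F_s(r)|>|b|$. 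This places $b$ on the segment $[0,F_s(r)]$, which lies in $F_s(\D)$ if $F_s$ is starlike.
\end{itemize}
The paper's route needs no statement about what $F_s(\D)$ contains. It only needs $F_s$ to be analytic and nonzero near $e^{it}$, so that negativity of $\Re(zF_s'/F_s)$ at $e^{it}$ persists at nearby points of $\D$.

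A small slip: the regular part of $F_s$ at $1$ is $-s+(1-s)C$, not $C$. Only its imaginary part $-(1-s)\log 2$ enters $y_\pm$, and you use that correctly.
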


\begin{proof}
It is clear that $F_s \in \R$, so we will show that $F_s\notin\S^{*}$ whenever $0<s<\log 2/(\pi+\log 2)$.
Let $z=e^{it}$ with $t>0$ sufficiently small. 
We would like to observe the sign of
$$
\Re \frac{e^{it}F_s'(e^{it})}{F_s(e^{it})}
=
\frac{
\Re[\,e^{it}F_s'(e^{it})\overline{F_s(e^{it})}\,]
}{
|F_s(e^{it})|^2
}.
$$
Further, multiplying by $t >0$ does not change the sign.
Hence it suffices to show that 
$$
t \cdot\Re[e^{it}F_s'(e^{it})\overline{F_s(e^{it})}]
=
\Re[e^{it}F_s'(e^{it})] \cdot \Re[t F_s(e^{it})]
+
\Im [t e^{it}F_s'(e^{it})] \cdot \Im[F_s(e^{it})]
$$
is negative for some $t$. A direct calculation gives
$$
\left\{
\begin{array}{ll}
%\dstyle\lim_{t\to0+}t\,\Re F_s(e^{it}) = \lim_{t\to0+}t(-\cos t-\log(2-2\cos t)) = 0,
\dstyle\lim_{t\to0+}t\,\Re F_s(e^{it})
=
\lim_{t\to0+}
t\left(
-\cos t
-s\log(2-2\cos t)+(1-s)\left(\frac{\pi}{2}+t\right)
\right)
=0.\\[7pt]
\dstyle\lim_{t\to0+}\Im F_s(e^{it}) = \lim_{t\to0+}\bigl(-\sin t-st+s\pi -(1-s)\log(2-2\sin t)\bigr) = s\pi-(1-s)\log2,\\[7pt]
\dstyle\lim_{t\to0+}\Re\left(e^{it}F_s'(e^{it})\right)
=-2s,\\[7pt]
\dstyle\lim_{t\to0+}
t\Im\left(e^{it}F_s'(e^{it})\right)
=2s.
\end{array}
\right.
$$
Therefore
$$
\lim_{t\to0+}
t\Re\left(
e^{it}F_s'(e^{it})\overline{F_s(e^{it})}
\right)
=
%2s\{s\pi-(1-s)\log2\}
2s\{s(\pi+\log 2)-\log2\}<0,
$$
where the last inequality follows from
$
0<s<\log2/(\pi+\log2).
$
We conclude that $F_s$ is not starlike.
\end{proof}

Figure~\ref{nonstarlike} shows the boundary curve of $F_s(\D)$ for
$s=0.1$. 
One can see that the image domain is not starlike with respect to the origin.
\begin{figure}[h]
\centering
\includegraphics[width=220pt]{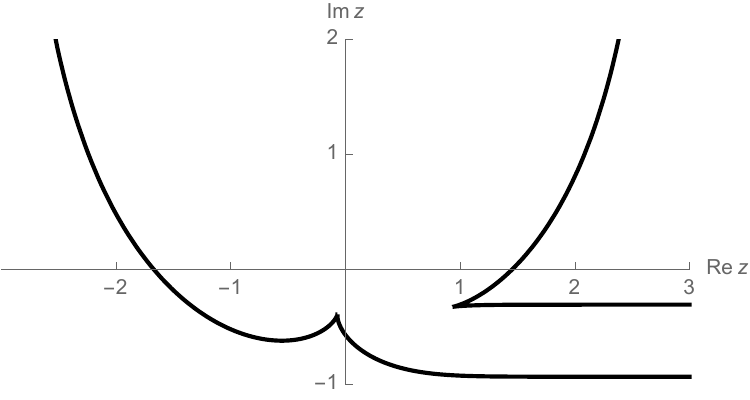}
\vspace{-5pt}
\caption{The shape of $\partial F_{s}(\D)$ when $s=0.1$.}
\label{nonstarlike}
\end{figure}

%\begin{rem}
%For every $s\in(0,1)$ and $u\in(0,2\pi)$,
%$F_{s,u}\in\S^*$ if and only if $F_{1-s,u}\in\S^*$.
%Indeed, this follows from rotation and reflection, since $
%e^{-iu}F_{s,u}(e^{iu}z)=F_{1-s,2\pi-u}(z)
%$
%and
%$
%\overline{F_{s,u}(\overline{z})}=F_{s,2\pi-u}(z).
%$
%Consequently, $F_s$ also does not belong to $\S^*$ if
%$\pi/(\pi+\log 2)<s<1$.
%\end{rem}

\appendix

\section{Spherical distance and local connectedness}

Let $s$ denote the spherical distance on $\CC$ induced by
$$
ds=\frac{|dw|}{1+|w|^{2}}.
$$
We denote by $\mathcal H_s^{1}$ the corresponding one-dimensional
Hausdorff measure. More generally, for a metric space $(X,\rho)$ and
a set $E\subset X$, the one-dimensional Hausdorff measure of $E$
with respect to $\rho$ is defined by
$$
\mathcal H_{\rho}^{1}(E)
:=
\lim_{\delta\downarrow0}\,
\inf
\left\{
\sum_{j=1}^{\infty}\operatorname{diam}_{\rho}(U_j)
:
E\subset\bigcup_{j=1}^{\infty}U_j,\
\operatorname{diam}_{\rho}(U_j)<\delta
\right\}.
$$
For general facts about Hausdorff measures, see e.g. \cite{MR4520153}.

We shall use two classical results concerning continua of finite
one-dimensional Hausdorff measure. The first one provides a
parametrization of such continua and, in particular, allows us to
deduce local connectivity.

\begin{lem}
\label{Wazewski-lemma}
Let $(X,\rho)$ be a metric space and let $K\subset X$ be a continuum.
If $\mathcal H_{\rho}^{1}(K)<\infty$, then there exists a Lipschitz mapping $L:[0,1]\to X$ such that $L([0,1])=K$.
\end{lem}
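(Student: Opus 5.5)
The plan follows the classical Ważewski/Eilenberg--Harrold approach: build a finite connected graph that approximates $K$, and traverse it by an Euler-type tour that passes each edge at most twice. The total length of the tour stays bounded by roughly $2\mathcal H^1_\rho(K)$, and a limiting argument then gives the Lipschitz parametrization. We may assume $K$ is nondegenerate; otherwise a constant map suffices. Put $H:=\mathcal H^1_\rho(K)<\infty$.

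The first step is the key metric fact: for any two points $x,y$ of a continuum $K$, $\rho(x,y)\le \operatorname{diam}_\rho K\le \mathcal H^1_\rho(K)$. This comes from projecting onto the distance function $t\mapsto\rho(x,t)$. That map is $1$-Lipschitz, so it does not increase $\mathcal H^1$, and by connectedness its image contains the interval $[0,\rho(x,y)]$. Compactness of $K$ also gives total boundedness.

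Next fix $\varepsilon>0$ and choose a finite $\varepsilon$-net $x_1,\dots,x_N$ of $K$. The aim is a finite tree $T_\varepsilon$ with vertices $x_i$, where each edge $\{x_i,x_j\}$ has a weight $\le\rho(x_i,x_j)$ and the total weight is at most $H+o(1)$. I would get this from a Kruskal-type construction that uses connectedness of $K$: the $\varepsilon$-chains inside $K$ join any two net points, and the sub-continua between clusters have disjoint pieces whose $\mathcal H^1$ measures add up. A depth-first walk around $T_\varepsilon$ then gives a closed path of length $\le 2H+O(\varepsilon)$. Parametrize this walk proportionally to arclength on $[0,1]$, interpolating between consecutive vertices by any assignment, for example a step function. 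This yields maps $L_\varepsilon$ from a finite set into $K$ that are $(2H+1)$-Lipschitz on their domains and whose images are $\varepsilon$-dense in $K$.

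Finally, let $\varepsilon=2^{-m}$ and use refinements, or a diagonal Arzelà--Ascoli argument on the dense set of dyadic rationals. The uniformly Lipschitz maps into the compact set $K$ have a subsequence converging pointwise on the dyadics. The limit is $(2H+1)$-Lipschitz there and extends to all of $[0,1]$ because $K$ is complete. Its image is compact, lies in $K$, and is dense in $K$, since the images were $\varepsilon$-dense with $\varepsilon\to0$. Hence $L([0,1])=K$.

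The main obstacle is the tree step: bounding the total edge length of the spanning tree by $\mathcal H^1_\rho(K)+o(1)$. The naive bound multiplies $H$ by the number of edges, which is useless. My first attempt would be the Eilenberg--Harrold style argument: order the edges by Kruskal's algorithm, and for each newly added edge exhibit a sub-continuum of $K$ near the connecting gap. Because these pieces are pairwise disjoint, additivity of $\mathcal H^1$ on them, together with the diameter inequality from the first step, bounds the sum of the edge lengths.
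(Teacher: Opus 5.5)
The paper does not prove this lemma; it cites Wa\.zewski and Alberti--Ottolini, so the comparison is with the standard argument. Your outer framework matches it: a connected finite structure inside $K$ of controlled length, a closed walk covering each edge twice, constant-speed parametrization, and an Arzel\`a--Ascoli limit. Your first step ($\operatorname{diam}_\rho K\le\mathcal H^1_\rho(K)$ via $x\mapsto\rho(p,x)$) is also correct. A small slip: the walk induces a Lipschitz map only if weights are $\ge\rho(x_i,x_j)$, not $\le$.

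\textbf{The gap.} The step carrying all the content of Wa\.zewski's theorem, the length bound for the tree on the $\varepsilon$-net, is not proved, and the mechanism you propose for it cannot work. A tree whose vertices are only net points can be strictly longer than $K$. Let $K$ be the planar tripod of three unit segments from $0$ at mutual angles $2\pi/3$, so $\mathcal H^1(K)=3$. Take as net the points at distances $\varepsilon,2\varepsilon,\dots,1$ from $0$ on each arm, with $1/\varepsilon$ an integer. Its minimum spanning tree has length $3(1-\varepsilon)+2\sqrt3\,\varepsilon>3$. So no family of pairwise disjoint subcontinua $C_e\subset K$ with $\operatorname{diam}C_e\ge|e|$ can exist for the Kruskal edges $e$, since that would give $\sum_e|e|\le\sum_e\mathcal H^1_\rho(C_e)\le3$. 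The excess comes from branch points the net misses. Nothing in your sketch controls how these errors accumulate over the many (of order $\mathcal H^1_\rho(K)/\varepsilon$) edges.

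\textbf{How to repair it.} You do not need the sharp constant: any bound $C\,\mathcal H^1_\rho(K)$ on the walk length suffices. The missing idea is to use auxiliary vertices of $K$ at a much finer scale $\delta\ll\varepsilon$, together with minimality.
\begin{enumerate}
\item Enumerate the net as $p_1,\dots,p_N$ and set $T_1=\{p_1\}$.
\item Join $p_{k+1}$ to the current vertex set $T_k\subset K$ by a chain $p_{k+1}=y_0,\dots,y_m$ in $K$ with steps $<\delta$ and $\operatorname{dist}(y_m,T_k)<\delta$. Such a chain exists by connectedness; choose $m$ minimal.
\item Minimality forces $\rho(y_i,y_j)\ge\delta$ for $|i-j|\ge2$, and $\operatorname{dist}(y_i,T_k)\ge\delta$ for $i<m$. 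Hence the even-indexed non-terminal chain points of all stages form a $\delta$-separated set.
\item The balls of radius $\delta/2$ about these points are disjoint. Each meets $K$ in $\mathcal H^1_\rho$-measure at least $\delta/2$, by your projection argument once $\delta<\operatorname{diam}K$.
\item Summing, the tree has total length at most $4\mathcal H^1_\rho(K)+N\delta$, so the doubled walk has length at most $8\mathcal H^1_\rho(K)+o(1)$ as $\delta\to0$.
\end{enumerate}
Every step is now $<\delta$, so your step-function interpolation is Lipschitz up to an additive error $\delta$. Your Arzel\`a--Ascoli argument then goes through, provided you use equicontinuity (uniform convergence) to show that the limit actually hits every point of $K$.

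Alternatively, the two-point version of this chain argument yields Lipschitz paths, hence arcs in $K$ of length at most $\mathcal H^1_\rho(K)$. Attaching such arcs from a dense sequence to the growing tree at first hitting points gives trees of length at most $\mathcal H^1_\rho(K)$ and the sharp Lipschitz constant $2\mathcal H^1_\rho(K)$.
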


The existence of such a Lipschitz parametrization goes back to
Wa{\.z}ewski \cite{Wazewski1927}. See \cite[Theorem 2.1]{MR4520153} for a concise formulation, and \cite[Theorem 4.4]{AlbertiOttolini2017} for a more detailed treatment in the setting of metric spaces; see also the references therein.

The second fact is the lower semicontinuity of the one-dimensional
Hausdorff measure under Hausdorff convergence of continua, usually
referred to as \textit{Go{\l}\k{a}b's semicontinuity theorem}. 
Recall that for two nonempty compact subsets $A,B \subset X$, their Hausdorff distance is defined by
$$
d_{\mathcal H}^{\rho}(A,B)
:=
\max\left\{
\sup_{a\in A}\rho(a,B), \,\sup_{b\in B}\rho(b,A)
\right\}.
%,\quad\textup{where}\quad\rho(x,A):=\inf_{a\in A}\rho(x,a).
$$
\begin{lem}
\label{Golab-lemma}
Let $(X,\rho)$ be a metric space, and let ${K_n}$ be a sequence of
continua in $X$. Suppose that $K_n$ converges in the Hausdorff
distance to a continuum $K\subset X$, that is,
$d_{\mathcal{H}}^{\rho}(K_n,K) \to 0$ as $n \to \infty$.
Then
\[
\mathcal H_{\rho}^{1}(K) \leq \liminf_{n\to\infty} \mathcal{H}_{\rho}^{1}(K_n).
\]
\end{lem}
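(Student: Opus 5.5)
We may assume $\ell:=\liminf_{n\to\infty}\mathcal H^1_\rho(K_n)<\infty$, since otherwise there is nothing to prove. Passing to a subsequence, we may also assume $\mathcal H^1_\rho(K_n)\to\ell$ and $\operatorname{diam}_\rho K>0$. The plan is the standard measure-theoretic proof: turn the sets into measures, pass to a weak limit, and estimate the density of the limit measure from below at points of $K$.

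\textbf{Step 1 (compactness).} The set $Y:=K\cup\bigcup_n K_n$ is totally bounded, because every $K_n$ lies in a small neighbourhood of the compact set $K$. Hence its completion $\widehat Y$ is a compact metric space. Put $\mu_n:=\mathcal H^1_\rho\llcorner K_n$. These are finite Borel measures on $\widehat Y$ with total mass bounded by $\sup_n\mathcal H^1_\rho(K_n)<\infty$. By weak-$*$ compactness, after a further subsequence $\mu_n\rightharpoonup\mu$ with $\mu(\widehat Y)\le\ell$. Since $K_n\to K$ in the Hausdorff distance, $\operatorname{supp}\mu\subset K$.

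\textbf{Step 2 (elementary density bound).} For every continuum $C$, every $x\in C$ and every $0<r<\operatorname{diam}_\rho C/2$, the function $y\mapsto\rho(x,y)$ is $1$-Lipschitz on $C$. Its image contains $[0,r]$, since $C$ is connected and has points outside $B(x,r)$. Hence $\mathcal H^1_\rho(C\cap\closure{B}(x,r))\ge r$. Applying this to $K_n$ at points $x_n\in K_n$ with $x_n\to x\in K$, and using upper semicontinuity of weak limits on closed balls, gives
\[
\mu(\closure B(x,r))\ge r\qquad(x\in K,\ 0<r<r_0).
\]
A Vitali-type covering argument then yields the weak bound $\mathcal H^1_\rho(K)\le C\ell<\infty$. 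By Lemma~\ref{Wazewski-lemma}, $K=L([0,1])$ for some Lipschitz map $L$.

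\textbf{Step 3 (sharp density).} The goal is to show that for $\mathcal H^1_\rho$-a.e.\ $x\in K$,
\[
\limsup_{r\downarrow0}\frac{\mu(\closure B(x,r))}{2r}\ge1 .
\]
The standard density theorem for the measure $\mu$ then gives $\mathcal H^1_\rho(K)\le\mu(\widehat Y)\le\ell$, which is the claim.

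To get the factor $2$, use the Lipschitz parametrization $L$. For $\mathcal H^1_\rho$-a.e.\ $x\in K$, the point $x=L(t_0)$ has the following property for small $r$: the set $K\cap B(x,r)$ contains points $y,z$ with $\rho(x,y)$ and $\rho(x,z)$ close to $r$, reached from $x$ along the two "sides" $L([t_0-\delta,t_0])$ and $L([t_0,t_0+\delta])$. Moreover, removing a small ball around $x$ disconnects $y$ from $z$ within $K\cap\closure B(x,r)$ up to an error of order $o(r)$; this uses metric differentiability of $L$ and the area formula. Choose $x_n,y_n,z_n\in K_n$ converging to $x,y,z$. The continuum $K_n$ contains a subcontinuum joining $x_n$ to $y_n$ and one joining $x_n$ to $z_n$, both inside $\closure B(x,r)$ up to small errors. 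Projecting each onto $[0,r]$ by $\rho(x,\cdot)$, as in Step 2, and arguing that the two pieces overlap only in a set of measure $o(r)$, gives $\mu_n(\closure B(x,r))\ge 2r-o(r)$.

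\textbf{Main obstacle.} The hardest point is exactly this "two disjoint arms" estimate in Step 3. Hausdorff convergence alone does not stop $K_n$ from reaching $y_n$ and $z_n$ through a single arm that leaves the ball, which would give only $r$. The first thing to try is to work at points $x$ where $K$ has approximate density $1$ relative to $2r$, i.e.\ $\mathcal H^1_\rho(K\cap B(x,r))/(2r)\to1$, and which are not endpoints. Then $K$, and hence $K_n$ for large $n$, must cross the whole annulus $\{r/2<\rho(x,\cdot)<r\}$ at least twice, giving two disjoint crossings each of length at least $r/2$. A finer choice of radii upgrades this to $2r-o(r)$.
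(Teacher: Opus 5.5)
The paper does not prove this lemma; it quotes it from Go{\l}\k{a}b and, for metric spaces, from Alberti--Ottolini. Your argument therefore has to stand on its own.

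Steps~1 and~2 are sound. So is the reduction at the start of Step~3: the density theorem for closed balls holds in any metric space, via the $5r$-covering argument and $\operatorname{diam}_\rho\closure B(x,r)\le 2r$. Everything therefore comes down to showing $\limsup_{r\downarrow0}\mu(\closure B(x,r))/(2r)\ge1$ for $\mathcal H^1_\rho$-a.e.\ $x\in K$. That factor-$2$ estimate is the whole content of Go{\l}\k{a}b's theorem, and the mechanism you propose for it does not work.

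Neither of your two claims follows from Hausdorff convergence: that $K_n$ contains subcontinua joining $x_n$ to $y_n$ and $x_n$ to $z_n$ inside $\closure B(x,r)$ up to small errors, nor the repair that $K_n$ must cross $\{r/2<\rho(x,\cdot)<r\}$ twice. A counterexample:
\begin{itemize}
\item Let $K$ be the unit circle and $x=1$, and fix a small $r$.
\item Pick $g\in K$ with $\Im g<0$ and $|g-1|=3r/4$.
\item Let $K_n$ be $K$ minus the open arc of length $1/n$ centred at $g$.
\end{itemize}
The $K_n$ are arcs converging to $K$ in the Hausdorff distance, and $K$ has density one everywhere and no endpoints. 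Yet for every $n$ the only subcontinuum of $K_n$ joining $1$ to a point just beyond the gap runs around the whole circle. For large $n$, the lower side of $K_n\cap\{r/2\le|w-1|\le r\}$ consists of two arcs, each meeting only one of the two boundary circles, so $K_n$ crosses the annulus only once. The bound $\mathcal H^1(K_n\cap\closure B(1,r))\approx 2r$ still holds here, but not for your reason. Your assertion that the two pieces overlap only in measure $o(r)$ is likewise unsupported.

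What is missing is an argument that needs no crossings at all. Write $\pi:=\rho(x,\cdot)$, $A:=\{a\le\pi\le b\}$ and $\varepsilon_n:=d_{\mathcal H}^{\rho}(K_n,K)$.
\begin{itemize}
\item Since $K_n\not\subset\{a<\pi<b\}$, the boundary bumping theorem says every component of the closure of $K_n\cap\{a<\pi<b\}$ meets $\{\pi=a\}\cup\{\pi=b\}$. Its image under $\pi$ is therefore an interval containing $a$ or $b$.
\item Suppose $K\cap\{a-\delta\le\pi\le b+\delta\}\subset P\cup Q$, with $P,Q$ disjoint compact sets, $\pi(P)\supset[a,b]$ and $\pi(Q)\supset[a,b]$. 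For large $n$, every point of $K_n\cap A$ is within $\varepsilon_n$ of exactly one of $P$ and $Q$, so each component above lies on one side.
\item Since $\pi(Q)\supset[a,b]$, each $s\in[a,b]$ has some $p\in K_n$ near $Q$ with $|\pi(p)-s|\le\varepsilon_n$. Hence the intervals anchored at $a$ and at $b$ on the $Q$-side leave a gap of length at most $2\varepsilon_n$ in $[a,b]$, and likewise on the $P$-side.
\item Because $\pi$ is $1$-Lipschitz, $\mathcal H^1_\rho(K_n\cap A)\ge2(b-a)-4\varepsilon_n$. With $a=\eta r$ and $b=r$ this gives $\mu(\closure B(x,r))\ge2(1-\eta)r$.
\end{itemize}

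You then still have to prove that $K$ splits in this way at $\mathcal H^1_\rho$-a.e.\ $x$, for each $\eta>0$ and all small $r$. Here is how the ingredients you mention enter.
\begin{itemize}
\item If $x=L(t_1)$ and $L$ is metrically differentiable at $t_1$ with positive metric derivative (true for a.e.\ $x$, by the area formula), then points at radii $s,s'$ on the two arms $L([t_1,t_1+u])$ and $L([t_1-u,t_1])$ lie at distance $s+s'+o(r)$. These arms carry measure at least $4r-o(r)$ in $\closure B(x,2r)$.
\item The a.e.\ bound $\limsup_{r\downarrow0}\mathcal H^1_\rho(K\cap\closure B(x,r))/(2r)\le1$ follows from the same density theorem. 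Together with your Step~2 estimate applied to $K$, it forces every other point of $K\cap A$ to lie within $o(r)$ of the arms.
\end{itemize}
``Density one and not an endpoint'' is not enough for this. The arcs $y=0$ and $y=t^2$, joined by short vertical segments at $t=2^{-k}$, have both properties at the origin, but admit no such splitting there.
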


This property goes back to a work of Go{\l}\k{a}b \cite{Golab1929}. For the Euclidean formulation, see
\cite[Theorem 2.2]{MR4520153}. A formulation and proof in the
more general setting of metric spaces can be found in \cite[Theorem 2.9]{AlbertiOttolini2017}.

We now apply these two results to the boundary behavior of conformal mappings.

\begin{thm}
\label{loca-connect-sph}
Let $f$ be conformal of $\D$ onto a domain
$\Omega:=f(\D)$. 
Suppose that
\[
\Lambda_{\textup{sph}}(f) =
  \sup_{0<r<1}
  \int_{\partial\D_r}
  \frac{|f'(z)|}{1+|f(z)|^2}\,|dz|
  <\infty,
\]
where $\D_r=\{z:|z|<r\}$. Then $\partial\Omega$ is locally connected.
\end{thm}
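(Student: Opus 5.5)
The idea is to realize $\partial\Omega$ as a Hausdorff limit of the analytic Jordan curves $C_r:=f(\partial\D_r)$. We then transfer the uniform spherical length bound to $\partial\Omega$ via Lemma~\ref{Golab-lemma}, and conclude with Lemma~\ref{Wazewski-lemma}. All of this is carried out in $X=\CC$ with the spherical metric $s$, which is compact.

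The first step is to compare length and measure. Each $C_r$ is a continuum (a Jordan curve), and its spherical length is at least its Hausdorff measure. Hence
\[
\mathcal H^1_s(C_r)\le\int_{\partial\D_r}\frac{|f'(z)|}{1+|f(z)|^2}\,|dz|\le\Lambda_{\textup{sph}}(f).
\]

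The second step extracts a convergent sequence. Since the space of nonempty compact subsets of the compact metric space $(\CC,s)$ is compact under $d^s_{\mathcal H}$, we can pick $r_n\uparrow1$ such that $C_{r_n}\to K$ in the Hausdorff distance. A Hausdorff limit of continua in a compact space is again a continuum. Lemma~\ref{Golab-lemma} then gives $\mathcal H^1_s(K)\le\Lambda_{\textup{sph}}(f)<\infty$.

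The third step identifies $K$ with $\partial\Omega$, or at least shows $\partial\Omega\subset K$, which we will see is enough. For $K\subset\partial\Omega$: a point $w\in K$ is a limit of points $f(z_n)$ with $|z_n|=r_n\to1$. Since $f$ is proper from $\D$ onto $\Omega$, such a limit cannot lie in $\Omega$, and it lies in $\closure{\Omega}$, so $w\in\partial\Omega$. For $\partial\Omega\subset K$: take $w\in\partial\Omega$ and $\varepsilon>0$, and pick $w'\in\Omega$ spherically within $\varepsilon$ of $w$ with $w'=f(z')$. For large $n$, $w'$ lies in $f(\D_{r_n})$, the interior of the Jordan curve $C_{r_n}$. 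On the other hand, points of $\CC\setminus\closure{\Omega}$ near $w$ lie outside it; if $\partial\Omega$ has empty exterior near $w$, use instead $f(z'')$ with $|z''|>r_n$. In either case a spherical segment of length about $2\varepsilon$ joins a point inside $C_{r_n}$ to a point outside, so it meets $C_{r_n}$. Thus $s(w,C_{r_n})\lesssim\varepsilon$ eventually, and $w\in K$. So $K=\partial\Omega$. This identification is the step needing most care; the sketch above should suffice, but it is where I expect to spend effort.

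Finally, $\partial\Omega$ is a continuum with $\mathcal H^1_s(\partial\Omega)<\infty$. Lemma~\ref{Wazewski-lemma} produces a Lipschitz, hence continuous, surjection $L:[0,1]\to\partial\Omega$. A continuous image of $[0,1]$ in a Hausdorff space is a Peano continuum, hence locally connected (Hahn--Mazurkiewicz, easy direction: quotient maps from compact spaces preserve local connectedness). This proves $\partial\Omega$ is locally connected.
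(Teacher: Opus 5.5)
Your proposal is correct and follows essentially the same route as the paper: the bound $\mathcal H^1_s(f(\partial\D_r))\le\Lambda_{\textup{sph}}(f)$, Hausdorff convergence of the level curves to $\partial\Omega$, then Lemma~\ref{Golab-lemma} and Lemma~\ref{Wazewski-lemma}. Your inclusions $K\subset\partial\Omega$ and $\partial\Omega\subset K$ are the paper's two one-sided convergences, proved by the same properness/exhaustion and connectedness arguments. The differences are cosmetic: you extract a convergent subsequence by Blaschke selection rather than proving convergence of the whole family, you finish with the easy direction of Hahn--Mazurkiewicz rather than Pommerenke's Theorem~2.1, and your case distinction in the third step is unnecessary, since $w\notin f(\closure{\D_{r_n}})$ and so $w$ itself already lies outside $C_{r_n}$.
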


\begin{proof}

Let $\mathcal{H}_{s}^{1}$ be the one-dimensional Hausdorff measure associated with the spherical distance, and $\Omega_{r} := f(\D_{r})$.
Recall that the spherical length of the curve $\partial \Omega_{r}$ is given by
\[
\ell_{s}(\partial \Omega_{r})
 =
\int_0^{2\pi}
  \frac{r|f'(re^{i\theta})|}
       {1+|f(re^{i\theta})|^2}\,d\theta.
\]
In general, the one-dimensional Hausdorff measure of the image of a rectifiable curve in a metric space does not exceed its length. Hence we have
\begin{equation}
\label{localthmintro-eq01}
\mathcal{H}_{s}^{1}(\partial \Omega_{r}) \leq \ell_{s}(\partial \Omega_{r}).
\end{equation}
Since $f$ is conformal on $\D$, its restriction to $\partial\D_r$ is injective, and hence $\partial \Omega_{r}$ is a simple closed curve. 
Therefore, equality actually holds above, although the inequality is sufficient for our purpose.
It follows from our assumption and inequality \eqref{localthmintro-eq01} that $\mathcal{H}_{s}^{1}(\partial \Omega_{r})$ has a uniform bound with respect to $r \in (0,1)$.

Next, in order to apply Lemma \ref{Golab-lemma}, we show that
$$
d_{\mathcal H}^{s}(\partial\Omega_r,\partial\Omega)\to0
\quad\textup{as}\quad r\to1.
$$
By the definition of the Hausdorff distance, it suffices to prove that
\begin{equation}
\label{localthm-inordertoshow}
\sup_{w\in\partial\Omega_r}s(w,\partial\Omega)\to0
\qquad\textup{and}\qquad
\sup_{\zeta\in\partial\Omega}s(\zeta,\partial\Omega_r)\to0
\end{equation}
as $r\to1$.

To prove the first convergence, suppose to the contrary that there exist a sequence $\{r_n\}$ with $r_n\uparrow1$ and a constant $\varepsilon>0$ such that, for every $n\in\N$ there exists $w_n\in\partial\Omega_{r_n}$ satisfying
\begin{equation}
\label{localthm-eq01}
s(w_n,\partial\Omega)\geq\varepsilon.
\end{equation}
By the compactness of $\CC$, passing to a subsequence if necessary, we may assume that $w_n\to w$ in the spherical metric.
As $\partial\Omega_{r_n}\subset\Omega$, we have $w\in\overline{\Omega}=\Omega\cup\partial\Omega$.
\eqref{localthm-eq01} implies that $w\notin\partial\Omega$, so we have $w\in\Omega$.
Choose a compact neighborhood $K$ of $w$ such that $K\subset\Omega$ and fix it. 
Since $\{w_{n}\}$ converges to $w$, for all sufficiently large $n$, we have $w_n\in K$.
On the other hand, the domains $\Omega_r$ form an increasing exhaustion of $\Omega$, namely
\begin{equation}
\label{locallem-exhaustion}
\Omega_{s}\subset\Omega_{r} \quad (s<r),
\qquad
\Omega=\bigcup_{0<r<1}\Omega_r.
\end{equation}
Hence, for all sufficiently large $m$, we have $K\subset\Omega_{r_{m}}$
Since $w_{m} \in K$ and $w_{m} \in \partial \Omega_{r_m}$, this implies
\[
w_m\in\Omega_{r_m}\cap\partial\Omega_{r_m} = \emptyset,
\]
a contradiction.
We conclude that the first convergence in \eqref{localthm-inordertoshow} holds.

As for the second convergence, suppose again to the contrary that there exist a sequence $\{r_n\}$ with $r_n\uparrow1$ and a constant $\varepsilon>0$ such that, for every $n\in\N$ there exists $\zeta_n\in\partial\Omega$ satisfying
\begin{equation}
\label{localthm-eq02}
s(\zeta_{n},\partial\Omega_{r_{n}})\geq\varepsilon.
\end{equation}
Since $\partial\Omega$ is compact in the spherical metric, passing to a subsequence, we may assume that $\zeta_{n} \to \zeta \in \partial \Omega$.
Hence for all sufficiently large $n$, $s(\zeta, \zeta_{n}) < \varepsilon/2$, and therefore $B_s\left(\zeta,\varepsilon/2\right) \subset B_s(\zeta_n,\varepsilon)$, where $B_{s}(\zeta, \varepsilon) := \{w \in \CC : s(\zeta, w) < \varepsilon\}$.
By \eqref{localthm-eq02}, $B_{s}(\zeta_{n}, \varepsilon)$ and $\Omega_{r_{n}}$ are disjoint. Consequently, we have
$$
B_s\left(\zeta,\frac{\varepsilon}{2}\right)
\cap\Omega_{r_n}=\emptyset.
$$
On the other hand, since $\zeta$ is a boundary point of $\Omega$,
$\Omega \cap B_{s}(\zeta, \varepsilon/2)$ is nonempty. 
Thus, there exists a point $w \in \Omega \cap B_{s}(\zeta, \varepsilon/2)$ which is not contained in any $\Omega_{r_{n}}$.
This fact contradicts the exhaustion property \eqref{locallem-exhaustion}.

Now we have proved that $d_{\mathcal H}^{s}(\partial\Omega_{r_n},\partial\Omega)\to 0$.
Therefore, by Lemma \ref{Golab-lemma},
$$
\mathcal H_s^1(\partial\Omega)
\leq
\liminf_{n\to\infty}
\mathcal H_s^1(\partial\Omega_{r_n})
\leq \Lambda_{\textup{sph}}(f).
$$
In particular $\mathcal H_s^1(\partial\Omega)<\infty$.
Since $\partial\Omega$ is a continuum, by Lemma \ref{Wazewski-lemma}, there exists a Lipschitz surjection
$
L:[0,1]\to\partial\Omega.
$
Hence $\partial\Omega$ is a curve in the sense of
\cite[Theorem 2.1]{Pom:1992boundary}. By the equivalence of conditions
{\rm (ii)} and {\rm (iii)} in that theorem, $\partial\Omega$ is
locally connected.
\end{proof}

\begin{rem}
The converse of the above theorem does not hold in general.
For example, let $\Omega$ be the interior of the Koch snowflake and let $f:\D\to\Omega$ be a conformal mapping. Then
$\partial\Omega$ is a Jordan curve, and hence is locally connected, whereas $\mathcal{H}_s^1(\partial\Omega)=\infty$.
Since $\mathcal{H}_s^1(\partial\Omega) \leq \Lambda_{\textup{sph}}(f)$
the latter supremum is infinite.
\end{rem}

The conformality assumption in Theorem \ref{loca-connect-sph} can be relaxed if one
assumes that the boundary of the image domain is a continuum.
Indeed, let $f$ be a nonconstant holomorphic function in $\D$, and suppose that $\partial f(\D)$ is a continuum and $\Lambda_{\textup{sph}}(f)<\infty$.
Set
$$
K_r:=f(\partial\D_r).
$$
Remark that $\de \Omega_{r} = \partial f(\D_{r})  \subset f(\partial\D_r) = K_{r}$, and $\de \Omega_{r}$ may not be a continuum.
Then $K_r$ is a continuum and $\mathcal H_s^1(K_r) \le \Lambda_{\textup{sph}}(f)$.
Moreover, since $\Omega_r=f(\D_r)$ increases to $\Omega$ and
$\partial\Omega_r\subset K_r$, every Hausdorff limit $K$ of a
sequence $K_{r_n}$ with $r_{n}\uparrow1$ contains
$\partial\Omega$. Hence, by the lower semicontinuity of
$\mathcal H_s^1$,
$$
\mathcal H_s^1(\partial\Omega)
\le
\mathcal H_s^1(K)
\le
\liminf_{n\to\infty}\mathcal H_s^1(K_{r_n})
<\infty.
$$
By Wa\.zewski's theorem, there exists a Lipschitz surjection from $[0,1]$ onto $\partial\Omega$. 
Hence, the Hahn--Mazurkiewicz theorem (see e.g. \cite[Theorem~8.14, p.~126]{MR1192552}) implies that $\partial\Omega$ is locally connected.
Summarizing the argument, we obtain the following theorem.

\begin{thm}
Let $f$ be a nonconstant holomorphic function in $\D$.
Suppose that $\partial f(\D)$ is a continuum and
$$
\Lambda_{\textup{sph}}(f)<\infty.
$$
Then $\partial f(\D)$ is locally connected.
\end{thm}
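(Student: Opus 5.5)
The plan is to follow the argument sketched just before the statement: set $\Omega:=f(\D)$, $\Omega_r:=f(\D_r)$, $K_r:=f(\partial\D_r)$, and combine Lemma~\ref{Golab-lemma} with Lemma~\ref{Wazewski-lemma}. For each $r\in(0,1)$ the set $K_r$ is the image of the circle $\partial\D_r$ under the continuous map $f$, so it is a continuum in $\CC$. It is the trace of a rectifiable curve whose spherical length equals $\int_{\partial\D_r}|f'(z)|/(1+|f(z)|^2)\,|dz|$. Hence, as in \eqref{localthmintro-eq01}, we get $\mathcal H_s^1(K_r)\le\Lambda_{\textup{sph}}(f)$ uniformly in $r$; no injectivity is needed for this inequality.

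Next, fix any sequence $r_n\uparrow1$. Since the nonempty compact subsets of the compact metric space $(\CC,s)$ form a compact space under $d_{\mathcal H}^s$ (Blaschke selection), we may pass to a subsequence with $K_{r_n}\to K$ in the Hausdorff distance. A Hausdorff limit of continua is a continuum. Lemma~\ref{Golab-lemma} then gives $\mathcal H_s^1(K)\le\liminf_n\mathcal H_s^1(K_{r_n})\le\Lambda_{\textup{sph}}(f)$.

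The key step, and the only one that is not formal, is the inclusion $\partial\Omega\subset K$; this is where the conformal case's argument (showing $\partial\Omega_r\to\partial\Omega$) must be replaced. Take $\zeta\in\partial\Omega$ and $\varepsilon>0$, and pick $w\in\Omega\cap B_s(\zeta,\varepsilon)$. Since $\Omega_r$ increases to $\Omega$, we have $w\in\Omega_{r_n}$ for all large $n$. Since $\zeta\notin\Omega$, we also have $\zeta\notin\Omega_{r_n}$. The ball $B_s(\zeta,\varepsilon)$ is connected and meets both $\Omega_{r_n}$ and its complement, so it meets $\partial\Omega_{r_n}$. Because $f$ is continuous on $\closure{\D_{r_n}}$, the set $f(\closure{\D_{r_n}})$ is compact and contains $\closure{\Omega_{r_n}}$. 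Any boundary point of $\Omega_{r_n}$ is therefore of the form $f(z)$ with $|z|\le r_n$, and it cannot have $|z|<r_n$, since then it would lie in $\Omega_{r_n}$. Hence $\partial\Omega_{r_n}\subset K_{r_n}$, and so $s(\zeta,K_{r_n})<\varepsilon$ for all large $n$. Letting $n\to\infty$ gives $s(\zeta,K)\le\varepsilon$. As $\varepsilon$ is arbitrary and $K$ is closed, $\zeta\in K$. (Points at $\infty$ cause no trouble, since all distances are spherical.) Monotonicity of $\mathcal H_s^1$ then yields $\mathcal H_s^1(\partial\Omega)\le\mathcal H_s^1(K)<\infty$.

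Finally, by hypothesis $\partial\Omega$ is a continuum in $(\CC,s)$ of finite $\mathcal H_s^1$-measure. Lemma~\ref{Wazewski-lemma} therefore gives a Lipschitz, in particular continuous, surjection $L:[0,1]\to\partial\Omega$. By the Hahn--Mazurkiewicz theorem, a Hausdorff continuous image of $[0,1]$ is locally connected, so $\partial\Omega$ is locally connected.

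The hypothesis that $\partial\Omega$ is a continuum is used only in this last step. It is genuinely needed, because without univalence $\partial f(\D)$ could be disconnected, and then Lemma~\ref{Wazewski-lemma} would not apply.
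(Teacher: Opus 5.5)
Your proposal is correct and follows essentially the same route as the paper: the uniform bound $\mathcal H_s^1(K_r)\le\Lambda_{\textup{sph}}(f)$ for $K_r=f(\partial\D_r)$, then Lemma~\ref{Golab-lemma} along a Hausdorff-convergent sequence $K_{r_n}\to K$ with $\partial f(\D)\subset K$, then Lemma~\ref{Wazewski-lemma} and the Hahn--Mazurkiewicz theorem. You also supply details the paper leaves implicit, namely Blaschke selection to obtain the limit $K$, the fact that $K$ is a continuum, and the inclusion $\partial f(\D)\subset K$, which you derive from $\partial\Omega_{r_n}\subset K_{r_n}$ and the connectedness of spherical balls.
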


\section{Local uniform convergence of linearly accessible functions}

Lemma~\ref{lemma-closedness-LA} in Section~2 asserts that the class of linearly accessible functions is closed in $\S$ with respect to locally uniform convergence. Since $\S$ is compact, this class is therefore compact as well. This closedness result goes back to Biernacki \cite[Lemme~III, p.~297]{Biernacki:1936}.
The original proof is written in French and uses the terminology and style of its time. For this reason, we revisit the argument here using modern terminology. Note that in this appendix, boundaries are taken in $\C$.

In this appendix, two half-lines are said to \textit{cross} if they intersect transversally at a point belonging to the relative interior of both half-lines. Under this definition, overlapping collinear half-lines are not regarded as crossing.

We first isolate the part of Biernacki's argument that can be obtained directly from the convergence of the image domains.

%
%
%\cite[Lemme III, p.~297]{Biernacki:1936}.
%Since the original proof is written in French and uses the terminology and style of its time, it may be less accessible to modern readers.
%For this reason, we revisit Biernacki's argument and try to present a proof in English using modern terminology. In doing so, we pay particular attention to the final geometric step, which is only briefly explained in the original paper.
%
%
%
%For this reason, we revisit the original argument using modern
%terminology. We give a detailed account of the construction of
%limiting half-lines and discuss the remaining geometric completion
%step separately.
%
%
%
%We first isolate the part of Biernacki's argument that can be obtained
%directly from the convergence of the image domains.

\begin{thm}
\label{LA-thm01}
Let $\{f_n\}$ be a sequence of functions in $\S$ converging locally
uniformly in $\D$ to a function $f\in\S$, and set
$\Omega_n:=f_n(\D)$ and $\Omega:=f(\D)$.
Suppose that each domain $\Omega_{n}$ is linearly accessible.
Then, for every finite boundary point $w\in\partial\Omega\cap\C$, there exists a closed half-line
$L_w$ issuing from $w$ such that
$$
L_w\subset\C\setminus\Omega.
$$
Moreover, the half-lines $L_w$ can be chosen without crossings.
\end{thm}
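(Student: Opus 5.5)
The plan is to use Carathéodory's kernel convergence theorem: since $f_n\to f$ locally uniformly with $f\in\S$, the domains $\Omega_n$ converge to $\Omega$ in the sense of kernels with respect to $0$. Fix $w\in\partial\Omega\cap\C$. The first step is to produce points $w_n\in\C\setminus\Omega_n$ with $w_n\to w$. Suppose this fails. Then some disk $B(w,\varepsilon)$ would lie in $\Omega_n$ for infinitely many $n$. For the corresponding subsequence, which still has kernel $\Omega$, the kernel would then contain $B(w,\varepsilon)\cup\Omega$, since that set is connected, contains $0$, and has every compact subset eventually inside $\Omega_n$. This would give $w\in\Omega$, a contradiction. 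So we may pick $w_n\in\partial\Omega_n$ with $w_n\to w$.

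Next we use linear accessibility of $\Omega_n$. Each $w_n$ lies on a closed half-line $\ell_n\subset\C\setminus\Omega_n$, which belongs to the family covering the complement. Shortening $\ell_n$ if necessary, we obtain a half-line $L_n=\{w_n+te^{i\theta_n}:t\ge0\}\subset\C\setminus\Omega_n$ issuing from $w_n$. By compactness of $\partial\D$ we pass to a subsequence with $\theta_n\to\theta$ and set $L_w:=\{w+te^{i\theta}:t\ge0\}$.

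To see that $L_w\subset\C\setminus\Omega$, suppose $p=w+t_0e^{i\theta}\in\Omega$. Choose a compact connected set $K\subset\Omega$ containing $0$ and a disk $\closure{B(p,\delta)}$. By kernel convergence, $K\subset\Omega_n$ for all large $n$. But $w_n+t_0e^{i\theta_n}\to p$, so this point lies in $B(p,\delta)\subset\Omega_n$ for large $n$. This contradicts $L_n\cap\Omega_n=\emptyset$.

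The main obstacle is the non-crossing requirement. Choosing the limits independently for each $w$ could produce crossings, since different subsequences might be used for different points. First note that a single limit never crosses: if $L_w$ and $L_{w'}$ met transversally at relative interior points, then for large $n$ the approximating half-lines $L_n$, $L'_n$ along a common subsequence would also cross transversally. This would contradict the non-crossing of the half-lines in the accessibility family of $\Omega_n$, because crossing is an open condition under perturbation of base points and angles. The difficulty is therefore only consistency of subsequences. To handle it, I would first take a countable dense set $\{w^{(j)}\}\subset\partial\Omega\cap\C$ and use a diagonal subsequence along which all $\theta_n^{(j)}$ converge, so that these limit half-lines are pairwise non-crossing. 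For a general $w$, I would take $w^{(j_k)}\to w$ and extract a limit direction of the $L_{w^{(j_k)}}$. This limit is again disjoint from $\Omega$, by the same argument as above, now applied with the closed set $\C\setminus\Omega$. It also does not cross the countable family, by openness of crossing. Two such general limits would not cross each other, because a transversal crossing would persist for nearby approximants from the dense family. I expect the delicate point to be degenerate configurations, such as collinear overlaps or endpoints on other lines, which the definition of crossing explicitly excludes. These need to be checked so that openness of transversal crossing really applies.
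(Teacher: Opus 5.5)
Your proposal is correct and follows essentially the same route as the paper. Both arguments use kernel convergence to get approximating points $w_n$, build limit half-lines on a countable dense subset of $\partial\Omega$ by a diagonal argument, extend to arbitrary $w$ by a second limit, and get non-crossing from the openness of transversal crossing at relative-interior points. The only difference is that you derive the approximating points directly from the definition of the kernel instead of citing Yanagihara's theorem; this yields only $w_n\in\C\setminus\Omega_n$ rather than $w_n\in\partial\Omega_n$, but that suffices, since every such point lies on a half-line of the family.
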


\begin{proof}
By the Carath\'eodory kernel convergence theorem, the domains $\Omega_n$ converge to $\Omega$ in the sense of kernel convergence with reference to the origin $0$.
Hence, by \cite[Theorem 6.2 (i)]{Yanagihara:2022}, for every finite point $\zeta\in\partial\Omega$, there exist points $\zeta_n\in\partial\Omega_n$ such that $\zeta_n\to\zeta$.
For every $n$, choose a representation of $\C\setminus\Omega_n$ as a union of closed half-lines satisfying the defining non-intersection condition for linear accessibility.
Thus, for every $\zeta_n\in\partial\Omega_n$, there exists a half-line of this representation containing $\zeta_n$. If necessary, replace it by the sub half-line beginning at $\zeta_n$. Then we obtain a closed half-line issuing from $\zeta_n$ and contained in $\C\setminus\Omega_n$.

We first carry out the construction on a countable dense subset
of $\partial\Omega$. Choose $E=\{w_1,w_2,\ldots\}\subset\partial\Omega$ dense. For each $j$, choose a sequence
$\{w_{j,n}\}_n$ with
$$
w_{j,n}\in\partial\Omega_n
\qquad\text{and}\qquad
w_{j,n}\to w_j,
$$
and choose a corresponding half-line
$$
L_{j,n}
:=
\{w_{j,n}+t e^{i\theta_{j,n}}:t\geq0\}
\subset\C\setminus\Omega_n.
$$
First consider the sequence $\{e^{i\theta_{1,n}}\}_{n\in\N}\subset\partial\D$. Since $\partial\D$ is compact, there exists an infinite index set $\Lambda_1\subset\N$ such that $e^{i\theta_{1,n}}$
converges as $n\to\infty$ with $n\in\Lambda_1$.
Next, consider the sequence $\{e^{i\theta_{2,n}}\}_{n\in\Lambda_1}\subset\partial\D$.
Again by compactness, there exists an infinite index subset
$\Lambda_2\subset\Lambda_1$ such that $e^{i\theta_{2,n}}$ converges as $n\to\infty$ with
$n\in\Lambda_2$.
Continuing inductively, we obtain a decreasing sequence of infinite
index sets
$
\Lambda_1\supset\Lambda_2\supset\cdots
$
such that, for each $j$, the sequence
$
\{e^{i\theta_{j,n}}\}_{n\in\Lambda_j}
$
converges.
By a diagonal argument, after passing to a common subsequence and
relabeling, we may therefore assume that
$$
e^{i\theta_{j,n}}\to e^{i\theta_j}\quad (n \to \infty)
$$
for every $j\in\N$.

For each $j$, define the limiting half-line
$$
L_{j}
:=
\{w_j+t e^{i\theta_j}:t\geq 0\}.
$$
We claim that $L_j\subset\C\setminus\Omega$.
Suppose to the contrary that $w_{j}+t_{0}e^{i\theta_j}\in\Omega$ for some $t_{0}\geq 0$. 
Since $\Omega$ is open, there exists
$\varepsilon>0$ such that
$
\{w \in \C : |w - (w_j+t_0e^{i\theta_j})| \leq \varepsilon\}
\subset\Omega.
$
By kernel convergence, this closed disk is contained in
$\Omega_n$ for all sufficiently large $n$. On the other hand,
$w_{j,n}+t_0e^{i\theta_{j,n}}$ converges to $w_j+t_0e^{i\theta_j}$, and hence
$w_{j,n}+t_0e^{i\theta_{j,n}}\in\Omega_n$ for all sufficiently large $n$. 
This contradicts $L_{j,n}\subset\C\setminus\Omega_n$.
Therefore $L_j\subset\C\setminus\Omega$.

The half-lines $\{L_j\}$ do not cross one another at any point other than their initial points. Indeed, if two of them crossed at a point distinct from both of their initial points, then the corresponding approximating half-lines $L_{j,n}$ would also cross for
all sufficiently large $n$, which is impossible.

We now extend the construction to every point of $\partial\Omega$. Let $w\in\partial\Omega$. Since $E$ is dense in $\partial\Omega$, we may choose a sequence $\{w_{j_k}\}\subset E$ such that
$w_{j_k}\to w$ as $k\to\infty$.
If $w\in E$, we may simply take the constant sequence $w_{j_k}=w$.
Passing to a subsequence if necessary, we may assume that
$
e^{i\theta_{j_k}}\to e^{i\theta}
$
for some $\theta\in[0,2\pi)$.
Define
$$
L_w
:=
\{w+t e^{i\theta}:t\geq0\}.
$$
We claim that $L_w\subset\C\setminus\Omega$.
Suppose to the contrary that $w+t_0e^{i\theta}\in\Omega$
for some $t_0\geq0$.
Since $w_{j_k}+t_0e^{i\theta_{j_k}} \to w+t_0e^{i\theta}$
and $\Omega$ is open, we have $w_{j_k}+t_0e^{i\theta_{j_k}}\in\Omega$ for all sufficiently large $k$.
This contradicts $w_{j_k}+t_0e^{i\theta_{j_k}}\in L_{j_k}
\subset\C\setminus\Omega$. Hence $L_w\subset\C\setminus\Omega$.

It remains to verify the noncrossing property.
Let $w,v\in\partial\Omega$ with $w\neq v$, and suppose to the contrary that $L_w$ and $L_v$ cross at a point distinct from their initial points.
Choose sequences
$
w_{j_k}\to w
$
and
$
w_{\ell_k}\to v
$
in $E$ such that
$
e^{i\theta_{j_k}}\to e^{i\theta_w}
$
and
$
e^{i\theta_{\ell_k}}\to e^{i\theta_v},
$
where $e^{i\theta_w}$ and $e^{i\theta_v}$ are the directions of
$L_w$ and $L_v$, respectively.
Since the crossing happens away from the starting points, they will still cross even if we make very small changes to the starting points and directions.
Hence, for all sufficiently large $k$, the half-lines $L_{j_k}$ and $L_{\ell_k}$ also cross.
This contradicts the noncrossing property of the family $\{L_j\}$.
Therefore the family $\{L_w:w\in\partial\Omega\}$ can be chosen without crossings.
This yields the desired result, and the proof is complete.
\end{proof}

\begin{rem}
The noncrossing property in Theorem~\ref{LA-thm01} is weaker than the
non-intersection condition in the definition of linear accessibility, because collinear overlaps are not excluded. For example, the family $L_1=[0,\infty), L_2=[1,\infty)$ and $L_3=(-\infty,2]$
satisfies the noncrossing condition in the sense defined above.
Here $L_2\subset L_1$, while $L_1$ and $L_3$ overlap collinearly and point in opposite directions.
It seems plausible that an opposite-direction overlap such as $L_1$ and $L_3$ cannot occur among the limiting half-lines constructed in Theorem~\ref{LA-thm01}. However, this does not follow from the limiting argument given in the proof of Theorem~\ref{LA-thm01} directly.
\end{rem}

It remains to show that the conclusion of Theorem~\ref{LA-thm01} implies that $\Omega$ is linearly accessible. The essential point is the following geometric assertion.
Let
$$
\mathcal L
:=
\bigcup_{w\in\partial\Omega}L_w,
$$
and let $Q$ be a connected component of $(\C\setminus\Omega)\setminus\mathcal L = \C \setminus (\mathcal L \cup \Omega)$.
Here, by an \textit{angular set} with vertex $a$, we mean a set of the form $\{a+\rho e^{i\theta}:\rho>0,\ \theta\in I\}$, where $\theta_1 \leq \theta_2<\theta_1+2\pi$ and $I$ is one of the intervals $(\theta_1,\theta_2), [\theta_1,\theta_2), (\theta_1,\theta_2], [\theta_1,\theta_2]$.
The point at issue is whether the following is valid.
\begin{problem}
\label{LA-problem}
$Q$ is necessarily an angular set whose vertex belongs to $\partial\Omega$.
\end{problem}
\noindent
A short argument for this problem appears in the remark preceding Lemme~III \cite[p.~297, Remarque, $\ell.$14 onward]{Biernacki:1936}. However, we have not been able to fully justify the geometric step that identifies $Q$ as an angular set.
If Problem \ref{LA-problem} is valid, the remainder of the proof is straightforward.

We note that Lemma~\ref{lemma-closedness-LA} is relevant also to the classical equivalence between close-to-convexity and linear accessibility. The closedness of the class of linearly accessible functions plays an essential role in the proofs of the implication from close-to-convexity to linear accessibility given in \cite{Lewandowski:1960,BielLewa:1962}. A later proof in \cite{Koepf:1989} likewise makes use of the same closedness property.

\section*{Acknowledgements}
The authors would like to thank Professor Toshiyuki Sugawa and
Professor Hiroshi Yanagihara for their valuable comments and suggestions. 
They would also like to thank Professor Teodor Bulboac\u{a} for bringing the paper \cite{MR948445} to their attention.

	%	++++++++++++++++++++++++++++++++++++++++++++++++++++++
	%
	%		References
	%
	%	++++++++++++++++++++++++++++++++++++++++++++++++++++++

\bibliographystyle{amsalpha}
\def\cprime{$'$}
\providecommand{\bysame}{\leavevmode\hbox to3em{\hrulefill}\thinspace}
\providecommand{\MR}{\relax\ifhmode\unskip\space\fi MR }
% \MRhref is called by the amsart/book/proc definition of \MR.
\providecommand{\MRhref}[2]{%
  \href{http://www.ams.org/mathscinet-getitem?mr=#1}{#2}
}
\providecommand{\href}[2]{#2}

\end{document}